\newcommand{\R}{\varmathbb{R}}
\newcommand{\Z}{\varmathbb{Z}}
\newcommand{\Rn}{{\varmathbb{R}^n}}
\newcommand{\Rk}{\varmathbb{R}^2}
\newcommand{\vint}{\operatornamewithlimits{\boldsymbol{--} \!\!\!\!\!\! \int }}
\def\diam{\qopname\relax o{diam}}
\def\dist{\qopname\relax o{dist}}
\def\min{\qopname\relax o{min}}
\def\diam{\qopname\relax o{diam}}
\def\Cig{\qopname\relax o{Cig}}
\def\phi{\varphi}
\let\oldmarginpar\marginpar
\renewcommand\marginpar[1]{\-\oldmarginpar[\raggedleft\footnotesize #1]%
{\raggedright\footnotesize #1}}
\theoremstyle{plain}
\newtheorem{theorem}[equation]{Theorem}
\newtheorem{lemma}[equation]{Lemma}
\newtheorem{corollary}[equation]{Corollary}
\theoremstyle{definition}
\newtheorem{definition}[equation]{Definition}
\newtheorem{example}[equation]{Example}
\theoremstyle{remark}
\newtheorem{remark}[equation]{Remark}
\numberwithin{equation}{section}
\title{Embeddings  into Orlicz spaces via the modified Riesz potential}
\author{Petteri Harjulehto}
\address[Petteri Harjulehto]{Department of Mathematics and Statistics,
FI-20014 University of Turku, Finland}
\email{petteri.harjulehto@utu.fi}
\author{Ritva Hurri-Syrj\"anen}
\address[Ritva Hurri-Syrj\"anen]{Department of Mathematics and Statistics,
FI-00014 University of Helsinki, Finland}
\email{ritva.hurri-syrjanen@helsinki.fi}
\date{\today}
\begin{document}

\keywords{Riesz potential, point-wise estimate, Orlicz space, unbounded convex domain, non-smooth domain, Sobolev inequality, Poincar\'e inequality}
\subjclass[2010]{31C15 , 42B20, 26D10, 46E30, 46E35}

\begin{abstract} $L^1_1$-functions which are defined in non-smooth  domains 
in the $n$-dimensional Euclidean space can be estimated point-wise by the modified Riesz potential of their gradients. These point-wise estimates imply embeddings into Orlicz spaces from the space $L^1_p$, $1\le p <n$, where the functions are defined in bounded or unbounded domains with minimum requirement of the smoothness  of the boundary. The results are sharp for  $L^1_1$-functions. 
\end{abstract}

\maketitle

\markboth{\textsc{Petteri Harjulehto and Ritva Hurri-Syrj\"anen }}
{\textsc{Petteri Harjulehto and Ritva Hurri-Syrj\"anen }}

%%%%%%%%%%%%%%%%%%%%%%%%%%%%%%%%%%%%%%%%%%%%%%%%%%%%%%%
\section{Introduction}

It is well known that 
a locally Lipschitz function
can be estimated point-wise 
by the Riesz potential of its gradient
in bounded John domains,
\cite[Theorem]{R}, \cite[Theorem 10]{Haj01},
and hence, especially, in Lipschitz domains and
in convex domains,
\cite[Lemma 7.16]{Gilbarg-Trudinger}.
By modifying the Riesz potential,
point-wise
estimates  
can be generalized for 
functions which are defined in more irregular domains than John domains, 
\cite[Theorem 3.4]{HH-SK}, \cite[Theorem 4.4]{HH-S1}.
More precisely, for every function $u$ whose weak distributional partial derivatives are in $L^1(G)$,
the pointwise estimate
\begin{equation}\label{modified}
|u(x) - u_D| \le \int_G \frac{|\nabla u(y)|}{\psi(|x-y|)^{n-1}} \, dy
\end{equation}
holds
for almost every $x \in G$. 
Here, $G$ is a domain in the $n$-dimensional Euclidean space and the regularity of the boundary is controlled by the function $\psi$.
Hedberg's method
\cite[Lemma, Theorem 1]{Hed72}
can be extended so that this point-wise estimate leads to the Sobolev-type inequality where  an Orlicz-space is the target space.
Hedberg's method has been used by A. Cianchi and B. Stroffolini for the classical Riesz potential when
functions are Orlicz functions,
\cite[Theorem 1, Corollary 1]{CS}, and 
by the authors for the modified Riesz potential
with a special Orlicz function,
\cite[Theorem 1.1]{HH-SK} and \cite[Theorem 1.1]{HH-S},
and with a general Orlicz function in
\cite[Corollary 3.4, Corollary 5.4]{HH-S1}.
For other papers on Orlicz embeddings of Cianchi we refer to \cite{Cianchi1996}, \cite{Cianchi1999}.

In the present paper we show that the optimal Orlicz function for the modified Riesz potential 
in \eqref{modified} can be found as a function of $\psi$
which depends on the geometry of the domain $G$.
 Our main theorem is the following theorem where we give the formula to the Orlicz function.

\begin{theorem}\label{thm:defn_H_intro}
Let $1\le p<n$.
Let the continuous, strictly increasing function $\varphi : [0,\infty )\to [0,\infty )$  be such that 
$\phi(0) = \lim_{t \to 0^+} \phi(t) =0$ and
$\phi$ satisfies
the $\Delta_2$-condition and
the inequality $\frac{\varphi (t_1)}{t_1}\le
\frac{\varphi (t_2)}{t_2}$ whenever   $0<t_1\le t_2$.
If
\begin{equation}\label{equ:psi_a}
\psi (t) = \begin{cases}
\phi(t) & \quad \text{when} \quad 0\le t \le 1;\\
\phi(1) t & \quad \text{when} \quad t \ge 1,\\
\end{cases}
\end{equation}
then there exists an $N$-function $H$  that satisfies the $\Delta_2$-condition, and  
\[
H^{-1}(t)  \approx \frac{t^{\frac1p-1}}{\psi \left(t^{- \frac{1}{n}} \right)^{n-1}}
\quad \text{for} \quad t>0.
\]
\end{theorem}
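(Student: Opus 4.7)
The plan is to set
\[
F(t) := \frac{t^{\,1/p-1}}{\psi(t^{-1/n})^{\,n-1}}, \qquad t>0,
\]
to show that $F$ has the qualitative features of the inverse of an $N$-function satisfying $\Delta_2$, and to produce $H$ accordingly.

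First, I would record the elementary behavior of $F$. Since $\psi$ coincides with $\varphi$ on $[0,1]$ and is linear on $[1,\infty)$, one has
\[
F(t)=\begin{cases}\varphi(1)^{-(n-1)}\,t^{\,1/p-1/n},&0<t\le 1,\\[2pt] t^{\,1/p-1}/\varphi(t^{-1/n})^{\,n-1},& t\ge 1,\end{cases}
\]
so $F$ is continuous, strictly increasing, and vanishes at $0^+$ (since $1/p>1/n$). The growth $F(\infty)=\infty$ follows from $\varphi(s)\le\varphi(1)s$ on $[0,1]$, a restatement of the hypothesis that $\varphi(s)/s$ is non-decreasing, which yields $F(t)\ge c\,t^{\,1/p-1/n}$.

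The central step is a two-sided scaling estimate
\[
\lambda^{1/p-1/n}F(t)\le F(\lambda t)\le C_\lambda F(t),\qquad \lambda\ge 1,
\]
in which $C_\lambda$ depends only on $\lambda$, $n$, $p$ and the $\Delta_2$-constant of $\varphi$. The lower bound is a direct consequence of $\psi(s)/s$ being non-decreasing: writing $u=t^{-1/n}$ and $v=\lambda^{-1/n}u$, one has $\psi(u)/\psi(v)\ge u/v=\lambda^{1/n}$, and multiplying by the prefactor $\lambda^{1/p-1}$ yields the stated power. The upper bound comes from iterating the $\Delta_2$ condition on $\psi$ (inherited from $\varphi$) to obtain the polynomial majorant $\psi(\mu v)\le C_\psi\mu^{\log_2 C_\psi}\psi(v)$ for $\mu\ge 1$. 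These two estimates say that $F$ has positive lower and subunit upper Matuszewska--Orlicz indices, which is the standard criterion for $F$ to be equivalent to the inverse of an $N$-function satisfying $\Delta_2$.

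I would then construct $H$ through its inverse by the integral formula
\[
H^{-1}(t):=\int_0^t \frac{F(\sigma)}{\sigma}\,d\sigma,
\]
which converges at the origin precisely because $p<n$, and which is essentially concave by the scaling estimates. The equivalence $H^{-1}\approx F$ is a two-sided comparison: integrating $F(\sigma)/\sigma\ge F(t)/t$ over $(t/2,t)$ gives $H^{-1}(t)\ge F(t)/2$, while a dyadic splitting of $(0,t)$ combined with the lower doubling in the form $F(t\,2^{-k})\le 2^{-k(1/p-1/n)}F(t)$ sums to $H^{-1}(t)\le CF(t)$. Inverting yields a convex $H$; the $N$-function axioms and the $\Delta_2$-condition for $H$ are then transferred from the corresponding scaling estimates for $F$ through this equivalence. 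The main obstacle I anticipate is confirming that the upper Matuszewska--Orlicz index of $F$ stays strictly below $1$, equivalent to $H(s)/s\to\infty$ as $s\to\infty$, which rests on the delicate interplay among $p<n$, the monotonicity of $\psi(s)/s$, and the $\Delta_2$ hypothesis on $\varphi$ governing the growth of $\psi$ near $0$.
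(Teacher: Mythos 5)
Your construction differs from the paper's in one useful way: rather than invoking H\"ast\"o's regularization result (the paper cites \cite[Proposition~5.1]{Hasto}, which states that a function satisfying $\Delta_2$ with $f(x)/x$ increasing is equivalent to a convex function), you build a concave candidate $H^{-1}$ directly via the integral $\int_0^t F(\sigma)/\sigma\,d\sigma$ and show it is comparable to $F$ by a dyadic splitting. That is a perfectly sound alternative, and your handling of the $\Delta_2$ condition for $H$ from the lower scaling bound $F(\lambda t)\ge \lambda^{1/p-1/n}F(t)$ (take $\lambda = 2^{np/(n-p)}$) matches the paper's $F^{-1}(cs)\ge 2F^{-1}(s)$ step.

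However, the ``main obstacle'' you flag at the end is not a technicality you can defer---it is the crux, and it cannot be surmounted from the hypotheses as you have listed them. Convexity of $H$ and the limit $H(s)/s\to\infty$ both reduce to $F(t)/t$ being (almost) decreasing, which on $t>1$ reads $t\mapsto t^{1/p-2}/\varphi(t^{-1/n})^{n-1}$ decreasing. Your upper scaling bound $F(\lambda t)\le C_\lambda F(t)$, obtained by iterating $\Delta_2$ on $\psi$, gives $C_\lambda\lesssim\lambda^{\gamma}$ with
\[
\gamma=\frac1p-1+\frac{(n-1)\log_2 C^{\Delta_2}_{\phi}}{n},
\]
and nothing in the stated hypotheses forces $\gamma\le 1$. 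Concretely, take $p=1$ and $\varphi(t)=t^{n/(n-1)}$: this $\varphi$ is continuous, strictly increasing, vanishes at the origin, satisfies $\Delta_2$, and $\varphi(t)/t=t^{1/(n-1)}$ is non-decreasing, yet $F(t)=t$ for $t>1$, so $H\approx\mathrm{id}$ is not an $N$-function. The paper closes this gap with an \emph{additional} hypothesis, introduced in the preparatory remarks directly before the proof and also carried explicitly in Theorems~\ref{thm:Sobolev-Poincare-p>1}, \ref{thm:Sobolev-Poincare-p=1}, \ref{thm:Sobolev-Poincare-unbounded} and Corollary~\ref{corollary_bdd}: there exists $\alpha\in[1,n/(n-1))$ such that $t^\alpha/\varphi(t)$ is increasing. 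This gives $\psi(s)\gtrsim s^\alpha$ on $(0,1]$, hence $\gamma\le 1/p-2+\alpha(n-1)/n+1<1$, and both the monotonicity of $F(t)/t$ and $F(t)/t\to 0$ at infinity follow, exactly as in the paper's display establishing \eqref{equ:N/t_increasing}. Until you import and use that hypothesis, the integral $H^{-1}$ need not be concave (nor equivalent to a concave function) and $H$ need not satisfy (N3), so the proof is incomplete.
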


With this function we obtain the following point-wise estimate.

\begin{theorem}\label{thm_main}
Let $G$ be a domain in $\Rn$, $n\geq 2$.
Let $1\le p <n$.
If $H$ is the function from Theorem  \ref{thm:defn_H_intro}
and $\| f \|_{L^p (G)} \le 1$, then 
there exists  a constant $C$ such that 
the point-wise estimate
\begin{equation*}%\label{main_riesz_inequality}
H\left ( \int_{G}  \frac{|f(y)|}{\psi(  |x-y|)^{n-1}} \, dy\right )
\le C (M f (x))^p
\end{equation*}
holds for every $x\in \Rn$. Here,  $M f$ is the Hardy-Littelewood maximal operator of $f$ and the constant $C$ depends on $n$, $p$, and the $\Delta_2$-constant of $H$ only.
\end{theorem}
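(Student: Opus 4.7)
My plan is the classical Hedberg splitting, adapted to the modified kernel $\psi(|x-y|)^{-(n-1)}$, with the cutoff scale chosen to reproduce the structure encoded in $H^{-1}$. Fixing $x\in\R^n$ and a parameter $r>0$ to be determined, I would split
\[
\int_G \frac{|f(y)|}{\psi(|x-y|)^{n-1}}\,dy = I_1(r) + I_2(r),
\]
where $I_1(r)$ runs over $G\cap B(x,r)$ and $I_2(r)$ over $G\setminus B(x,r)$. For $I_1(r)$, I would decompose $B(x,r)$ into dyadic annuli $B(x,2^{-k}r)\setminus B(x,2^{-k-1}r)$, dominate the kernel by $\psi(2^{-k-1}r)^{-(n-1)}$ on each annulus, and use $\int_{B(x,2^{-k}r)} |f|\le C(2^{-k}r)^n\,Mf(x)$; the $\Delta_2$-condition on $\phi$ (inherited by $\psi$) then collapses the resulting geometric series to
\[
I_1(r)\le C\,\frac{r^n}{\psi(r)^{n-1}}\,Mf(x).
\]

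For $I_2(r)$, I would apply H\"older with exponents $p$ and $p'$ and dyadically decompose $\R^n\setminus B(x,r)$. The monotonicity hypothesis that $\psi(t)/t$ is non-decreasing forces $\psi(2^kr)\ge 2^k\psi(r)$, which combined with $p<n$ (hence $(n-1)p'>n$) makes the tail sum converge geometrically and yields
\[
I_2(r)\le C\,\frac{r^{n/p'}}{\psi(r)^{n-1}}\,\|f\|_{L^p(G)}\le C\,\frac{r^{n/p'}}{\psi(r)^{n-1}}.
\]
I would then balance the two bounds by taking $r=Mf(x)^{-p/n}$ (they coincide because $n-n/p'=n/p$); at this choice, both estimates equal $Mf(x)^{1-p}/\psi(Mf(x)^{-p/n})^{n-1}$. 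Setting $t=(Mf(x))^p$ in the formula of Theorem~\ref{thm:defn_H_intro} identifies this precisely with a constant multiple of $H^{-1}((Mf(x))^p)$; applying $H$ and absorbing the constant by its $\Delta_2$-property produces the claim. The degenerate cases $Mf(x)=0$ (forcing the integral to vanish) and $Mf(x)=+\infty$ are handled separately.

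The main technical point is the local dyadic comparability $\sum_k 2^{-kn}/\psi(2^{-k-1}r)^{n-1}\lesssim 1/\psi(r)^{n-1}$: this is the one place where the $\Delta_2$-condition on $\phi$ enters decisively, and the precise hypotheses on $\phi$ in Theorem~\ref{thm:defn_H_intro} are calibrated both to make this local estimate valid and to guarantee that $H$ is a genuine $N$-function. Up to this, all other steps are routine dyadic geometric-series manipulations.
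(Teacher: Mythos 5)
Your plan is the classical Hedberg splitting, which is exactly what underlies the paper's proof: the paper's proof of this theorem is a one-line citation of its Theorem~\ref{thm:pointwise-maximal} (imported from \cite{HH-S1}) together with Theorem~\ref{thm:defn_H_intro}, and the preparations before Theorem~\ref{thm:defn_H_intro} carry out precisely the choices $h(t)=C(n,\alpha)\,t^n/\psi(t)^{n-1}$, $\delta(t)=t^{-p/n}$ and the identification of the balanced bound with $F^{-1}(t^p)$. Your write-up unpacks this black box and gives a self-contained argument; the local annular sum, the H\"older tail with $(n-1)p'>n$, and the balance $r=(Mf(x))^{-p/n}$ all match the structure the paper uses. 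So the route is genuinely the same; you just re-derive Theorem~\ref{thm:pointwise-maximal} inline rather than invoking it.

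There is, however, one concrete error of attribution that would become a real gap if the claim were taken at face value. You write that the local dyadic comparability
\[
\sum_{k\ge 0}\frac{(2^{-k}r)^n}{\psi(2^{-k}r)^{n-1}}\lesssim \frac{r^n}{\psi(r)^{n-1}}
\]
``is the one place where the $\Delta_2$-condition on $\phi$ enters decisively.'' This is not correct: the $\Delta_2$-condition only allows you to trade $\psi(2^{-k-1}r)$ for $\psi(2^{-k}r)$, but it does not make the resulting series converge. Indeed $\phi(t)=t^{n/(n-1)}$ satisfies all of conditions~(1)--(5), including $\Delta_2$, and for $r\le1$ every term of the sum equals $1$, so the series diverges and $I_1(r)$ is not controlled by $r^n\psi(r)^{1-n}\,Mf(x)$. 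What actually makes the sum geometric is the additional hypothesis, introduced in the paper's preparations before Theorem~\ref{thm:defn_H_intro} and repeated in Theorems~\ref{thm:Sobolev-Poincare-p>1}--\ref{thm:Sobolev-Poincare-unbounded}, that there exists $\alpha\in[1,n/(n-1))$ with $t^\alpha/\phi(t)$ increasing: writing $\frac{(2^{-k}r)^n}{\psi(2^{-k}r)^{n-1}}= (2^{-k}r)^{n-\alpha(n-1)}\left(\frac{(2^{-k}r)^\alpha}{\psi(2^{-k}r)}\right)^{n-1}$, the monotonicity of $t^\alpha/\psi(t)$ bounds the second factor by $(r^\alpha/\psi(r))^{n-1}$, and $n-\alpha(n-1)>0$ gives geometric decay. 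The remaining steps of your proof (the tail estimate, the balancing, the identification with $H^{-1}$, and the application of $H$ using its own $\Delta_2$-property) are all sound.
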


By this point-wise estimate
we obtain embedding results for bounded and unbounded 
non-smooth domains.
Examples of these domains are Lipschitz domains and convex domains, but also 
domains with suitable outward cusps are allowed.

We define a class of domains which are controlled by the function $\psi$ 
from \eqref{equ:psi_a}. We call these domains  in Definition \ref{john} as
$\phi$-cigar John domains, since our definition is a modification of \cite[2.1]{Vaisala} where 
J. V\"ais\"al\"a has defined unbounded John domains with $\phi (t)=t$.
Hence, examples of  $\phi$-John domains  are the classical bounded and unbounded John domains, but also so called
$s$-John domains when $\phi(t)=t^s$.

We have the following corollary which recovers some of the known results of the Poincar\'e inequality.

\begin{corollary}\label{corollary_bdd}
If  there exists $\alpha \in[1, n/(n-1))$ such that
$t^\alpha/\phi(t)$ is increasing for $t>0$ and  if $D$  is  a bounded or an unbounded $\phi$-cigar John domain with a constant $c_J$  in $\Rn$, $n \ge 2$
and if $1\le p<n$, then with the function $H$ in Theorem \ref{thm:defn_H_intro} there exists a constant $C$ such that the inequality
\begin{equation*}
\inf_{b\in\R}
\| u-  b \|_{L^H(D)} \le C \|\nabla u \|_{L^p (D)},
\end{equation*}
holds 
for every $u \in L^1_{\textup{loc}}(D)$ with $|\nabla u| \in L^p(D)$. Here the constant $C$ depends  on $n$, $p$, $\Delta_2$-constants of $H$ and $\phi$, and John constant $c_J$ only.
\end{corollary}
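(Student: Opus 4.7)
The plan is to combine the pointwise estimate of Theorem \ref{thm_main} with a Riesz-potential-type representation of $u - b$ valid in $\phi$-cigar John domains, and then to integrate using the Luxemburg-norm characterization of $L^H$ together with the $L^p$-boundedness of the Hardy--Littlewood maximal operator. By positive homogeneity we may normalize so that $\|\nabla u\|_{L^p(D)} = 1$; it then suffices to produce some $b \in \R$ with $\int_D H(|u-b|/C)\, dx \le 1$, which by the definition of the Luxemburg norm yields $\|u - b\|_{L^H(D)} \le C$.

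The first substantive step is the pointwise gradient estimate
\[
|u(x) - b| \le C_1 \int_D \frac{|\nabla u(y)|}{\psi(|x - y|)^{n-1}} \, dy
\]
for a.e.\ $x \in D$, obtained by a chaining argument adapted from \cite[Theorem 3.4]{HH-SK} and \cite[Theorem 4.4]{HH-S1}. Here $b$ is taken as the mean of $u$ over a fixed Whitney-type ball when $D$ is bounded, and as a limit of such means along a fixed diverging $\phi$-cigar chain when $D$ is unbounded. The hypothesis that $t^{\alpha}/\phi(t)$ is increasing for some $\alpha < n/(n-1)$ enters here to force convergence of the chaining sums and to guarantee finiteness of the right-hand side in the unbounded case.

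With this inequality in hand, apply Theorem \ref{thm_main} to $f = |\nabla u|$, whose $L^p$-norm equals $1$, obtaining
\[
H\!\left(\frac{|u(x) - b|}{C_1}\right) \le C_2 \bigl(M f(x)\bigr)^p \qquad \text{for a.e.\ } x \in D,
\]
after using the $\Delta_2$-property of $H$ to absorb the constant $C_1$ into the argument. For $p > 1$, integrating over $D$ and applying the $L^p$-boundedness of $M$ gives
\[
\int_D H\!\left(\frac{|u(x) - b|}{C_1}\right) dx \le C_2 \int_{\Rn} (Mf)^p \, dx \le C_3 \|f\|_{L^p}^p = C_3,
\]
and a further rescaling by the $\Delta_2$-constant of $H$ reduces $C_3$ to $1$, completing the Luxemburg-norm bound with a constant that depends on $n$, $p$, the $\Delta_2$-constants of $H$ and $\phi$, and the John constant $c_J$.

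The main obstacle is the chaining step: one must verify that the $\phi$-cigar John condition really yields the pointwise representation, especially in the unbounded setting where a meaningful constant $b$ has to be produced as a limit along a diverging chain of balls and one must control the resulting tail series against $1/\psi^{n-1}$. The endpoint case $p = 1$ also needs a separate argument, since $M$ is unbounded on $L^1$; here one can either invoke a layer-cake decomposition based on the weak-$(1,1)$ inequality for $M$, or split the modified Riesz potential into a near-field piece (bounded by $R\cdot Mf$) and a far-field piece (bounded using $\|f\|_{L^1}$) and optimize the splitting radius, bypassing the maximal function entirely.
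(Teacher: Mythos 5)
Your argument for bounded domains with $1<p<n$ matches the paper's route: Theorem \ref{thm:Riesz} gives the pointwise representation $|u(x)-u_B|\le C\int_D|\nabla u|/\psi(|x-y|)^{n-1}\,dy$, Theorem \ref{thm_main} turns this into $H(|u-u_B|)\le C(M|\nabla u|)^p$ after normalizing $\|\nabla u\|_{L^p}=1$, and the $L^p$-boundedness of $M$ plus the Luxemburg-norm characterization closes the loop (together with a short swap from $u_B$ to $u_D$ via H\"older's inequality). That part of the proposal is sound.

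The serious gap is the treatment of unbounded domains. You claim a chaining pointwise estimate
$|u(x)-b|\le C\int_D|\nabla u|/\psi(|x-y|)^{n-1}\,dy$ directly in an unbounded $\phi$-cigar John domain, with $b$ ``a limit of means along a fixed diverging $\phi$-cigar chain.'' No such estimate is established in the paper, and it is not a routine adaptation: in V\"ais\"al\"a's unbounded John condition there is no canonical John center, the covering lemma (Lemma \ref{lem:covering}) is stated only for bounded $\psi$-John domains, and the constant in Theorem \ref{thm:Riesz} is tracked in terms of $\diam(D)$ (and only the factor $\min\{\diam(D),1\}$ visibly stabilizes). The paper instead exhausts $D=\bigcup_i D_i$ by bounded $\phi$-cigar John domains with a common John constant, applies the bounded-domain Poincar\'e inequality on each $D_i$ with a uniform constant, extracts a convergent subsequence of the averages $u_{D_i}\to b$, and passes to the limit with Fatou's lemma (Theorem \ref{thm:Sobolev-Poincare-unbounded}, following \cite{H-S92}). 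You would need to either adopt this exhaustion-and-Fatou scheme or actually prove the unbounded pointwise representation with uniform constants and a well-defined $b$; the latter is not obviously true and at minimum requires a careful argument about how the $\phi$-cigar John constants behave as the chain length goes to infinity.

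The second gap is $p=1$. You correctly note the maximal function is unbounded on $L^1$ and suggest either a layer-cake/weak-$(1,1)$ argument or a near/far-field split of the modified Riesz kernel. The paper's actual argument is a genuine truncation scheme: $u$ is decomposed into truncations $v_j=\max\{0,\min\{|u-u_B|-2^j,2^j\}\}$, each $v_j$ is fed to the pointwise estimate of Corollary \ref{cor:funktiosta-gradientiin}, and a Besicovitch-type covering converts the maximal-function bound back into a bound on $\int|\nabla v_j|$; summing over $j$ recovers $\|\nabla u\|_{L^1}$ with no loss, together with a separate small-level-set estimate that uses $H(t)\approx t^{n/(n-1)}$ for $t\le 1$. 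This is considerably more delicate than a weak-$(1,1)$ layer-cake alone, and the near/far-field Hedberg split is already baked into Theorem \ref{thm_main} and cannot ``bypass the maximal function'' without losing the sharp Orlicz gauge. Since the corollary explicitly includes $p=1$, leaving this case at the level of a remark is a real omission: the $p=1$ endpoint is where the claimed optimality of $H$ lives, and the proof there is the most technical part.

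In short: the $p>1$ bounded-domain core is correct and matches the paper, but the unbounded case needs the exhaustion/Fatou argument (or a full proof of the direct chaining representation with uniform constants), and the $p=1$ case needs the truncation-plus-covering argument actually carried out rather than sketched as alternatives.
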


We point out that
if $D$ is a bounded $s$-John domain, then $\phi(t) = t^s$, $t\geq 0$,  and this corollary yields that  the $\left(\frac{np}{n-np+sp(n-1)}, p \right)$-Poincar\'e inequality holds.
If $p=1$, the result is optimal. Thus the corollary  recovers some of the known results of  \cite[Theorem 10]{SmiS90},  \cite[Corollaries 5 and 6]{HajK98}, and \cite[Theorem 2.3]{KilM00}, but our proof is completely different from the previous proofs.

Especially, in Section 6 we construct an example of an unbounded domain which shows  that the Lebesque space cannot be the target space in this corresponding embedding if $\lim_{t \to 0^+} t/\phi(t) =\infty$.

The outline of the paper is as following:
We define  the domains we consider in Section 2 and we call them $\phi$-cigar John domains. We find the suitable Orlicz function  in Section 3,
prove embedding theorems in Section 4, recover some  Poincar\'e inequalities in Section 5, and
in Section 6 we construct an example of an unbounded $\phi$-cigar domain.

%%%%%%%%%%%%%%%%%%%%%%%%%%%%%%%%%%%%%%%%%%%%%%%%%%%%%%%%%%%%%%%%%%%%%%%%%%%%%%%%%%%
\section{John domains}
\label{John}

Throughout the paper we
let the function $\varphi : [0,\infty )\to [0,\infty )$ satisfy the following conditions
\begin{enumerate}
\item[(1)] $\phi$ is continuous,
\item[(2)] $\phi$ is strictly increasing,
\item[(3)] $\phi(0) = \lim_{t \to 0^+} \phi(t) =0$,
\item[(4)] there exists a constant $C_\varphi \ge 1$ such that 
\[
\frac{\varphi (t_1)}{t_1}\le C_\varphi
\frac{\varphi (t_2)}{t_2}
\]
 whenever   $0<t_1\le t_2$,
\item[($5$)] $\phi$ satisfies the $\Delta_2$-condition i.e.\ there exists a constant $C_\phi^{\Delta_2} \ge 1$ such that $\phi(2t) \le C_\phi^{\Delta_2} \phi(t)$ for every $t>0$.
\end{enumerate}
We write
\begin{equation}\label{equ:psi}
\psi (t) = \begin{cases}
\phi(t) & \quad \text{if} \quad 0\le t \le 1;\\
\phi(1) t & \quad \text{if} \quad t \ge 1.\\
\end{cases}
\end{equation}
Now, if $\phi$ satisfies the conditions (1)--(5), then $\psi$ does, too, and the constant in (4) is the same for the functions $\phi$ and $\psi$, that is 
$C_{\phi}=C_{\psi}$.

The definition of a bounded John domain goes back to F. John
\cite[Definition, p. 402]{J} who defined an inner radius and an outer radius domain, and later this domain was renamed as a John domain
in  \cite[2.1]{MS79}.

We extend the definition of John domains following J.~V\"ais\"al\"a
\cite[2.1]{Vaisala} in the classical case.
Let $E$ in $\Rn$, $n\geq 2$, be a closed rectifiable curve with endpoints $a$ and $b$. The subcurve between $x\,,y \in E$ is denoted by $E[x,y]$. For $x \in E$ we write
\[
q(x) = \min\bigg\{\ell\Big(E[a,x] \Big), \ell\Big(E[x,b] \Big) \bigg\},
\]
where $\ell\big(E[a,x]\big)$ is the length of the subcurve $E[a,x]$.

\begin{definition}\label{john}
A bounded or an unbounded domain $D$ in $\Rn$ is a $\varphi$-cigar John domain if there exists a constant $c_J >0$ 
such that each pair of points $a, b \in D$ can be joined by a closed rectifiable curve $E$  in $D$ such that
\[
\Cig E(a,b) = \bigcup\left\{B \left(x,  \frac{\psi(q(x))}{c_J} \right): x \in E\setminus\{a,b\} \right\} \subset D
\] 
where $B(x,r)$ is an open ball centered at $x$ with a radius $r>0$ and the function 
$\psi$ is defined as in \eqref{equ:psi}.
\end{definition}

The set
$\Cig E(a,b)$ is called a cigar with core $E$ joining $a$ and $b$. 
We point out that if $D$ is a $\varphi$-cigar John domain with $\varphi(t) = t^p$, $p \ge 1$, then it is a $\varphi$-cigar John domain with $\varphi(t) = t^q$ for every  $q \ge p$.
For the case $\psi (t)=\varphi (t)=t$ for all $t\geq 0$,
in Definition \ref{john}, we refer to \cite[2.1]{Vaisala} and \cite[2.11 and 2.13]{Nakki_Vaisala}.

If $D$ is a bounded domain then the following definition from \cite[Definition 4.1]{HH-S1}
for a $\psi$-John domain
gives an equivalent definition
to a bounded $\varphi$-cigar John domain.

\begin{definition}\label{bounded-john}
A bounded domain $D$ in $\Rn\,, n\geq 2\,,$ is a $\psi$-John domain if there exist a constants $0< \alpha \le \beta<\infty$ and a point $x_0 \in D$ such that each point $x\in D$ can be joined to $x_0$ by a rectifiable curve $\gamma:[0,\ell(\gamma)] \to D$, parametrized by its arc length, such that $\gamma(0) = x$, $\gamma(\ell(\gamma)) = x_0$, $\ell(\gamma)\leq \beta\,,$ and
\[
\psi(t) \leq  \frac{\alpha}{\ell(\gamma)} \dist\big(\gamma(t), \partial D\big) \quad \text{for all} \quad t\in[0, \ell(\gamma)].
\]
The point $x_0$ is called a John center of $D$ and 
$\gamma$ is called a John curve of $x$.
\end{definition}

If  the function $\psi$ is defined as in \eqref{equ:psi} with the function $\phi$, then
a bounded domain is a $\psi$-John domain if and only if it is a $\phi$-John domain.  If $\psi(t) =t$, then our definition for bounded $\psi$-John domains coincides with the  definition of the classical John domains. If $\psi(t) = t^s$, $s\ge 1$, then our definition for bounded $\psi$-John domains coincides with the  definition of $s$-John domains.

\begin{theorem}\label{thm:John-John}
Let $D$ be a bounded domain. 
If $D$ is a $\psi$-John domain then $D$ is a $\varphi$-cigar John domain.
On the other hand, if $D$ is a $\varphi$-cigar John domain with a constant $c_J$, then $D$ is a $\psi$-John domain with constants
\[
\alpha= \frac{c_J\, \phi(1) \left(\max\left\{2, \frac{c_J \diam(D)}{\phi(1)} \right\} \right)^2}{\psi \left( \frac1{2c_J} \psi \left(\frac14 \diam(D)\right)\right)},
\]
\[
\quad
\beta = \max\left\{2, \frac{c_J \diam(D)}{\phi(1)} \right\}.
\]
\end{theorem}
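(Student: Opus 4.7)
I would prove the two implications separately. The forward direction ($\psi$-John implies $\varphi$-cigar John) proceeds by concatenating John curves, while the converse --- which carries the explicit constants stated in the theorem --- requires constructing and then modifying a cigar curve near the John center.

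For the forward direction, given $a,b\in D$, concatenate the John curves $\gamma_a\colon [0,\ell_a]\to D$ (from $a$ to $x_0$) with the reverse of $\gamma_b\colon [0,\ell_b]\to D$ (from $x_0$ to $b$) to form a single curve $E$ of length $\ell_a+\ell_b$. For a point $E(s)$ on the $\gamma_a$-portion one has $q(E(s))\le s$; since $\psi$ is increasing, the $\psi$-John condition yields
\[
\dist(E(s),\partial D)\ge \frac{\ell_a}{\alpha}\psi(s)\ge \frac{\ell_a}{\alpha}\psi(q(E(s))),
\]
and symmetrically on the $\gamma_b$-portion. The only delicate case is when $\min(\ell_a,\ell_b)$ is small, i.e.\ one endpoint lies near $x_0$; then $\dist(x_0,\partial D)>0$ combined with the triangle inequality and the bound $q(E(s))\le\min(\ell_a,\ell_b)$ yields a uniform $c_J$.

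For the converse, assume $D$ is a $\varphi$-cigar John domain with constant $c_J$. First I locate the John center: pick $a_*,b_*\in D$ with $|a_*-b_*|\ge\diam(D)/2$, join them by a cigar curve $E_*$, and let $x_0$ be the arc-length midpoint of $E_*$. Then $q(x_0)=\ell(E_*)/2\ge\diam(D)/4$, so $\dist(x_0,\partial D)\ge c_J^{-1}\psi(\diam(D)/4)$, which gives a ball $B(x_0,R_0)\subset D$ of radius $R_0$ of this size. Next I bound the length of the cigar curve $E$ joining an arbitrary $x\in D$ to $x_0$: the cigar condition applied at the midpoint of $E$ forces $\psi(\ell(E)/2)/c_J\le\diam(D)$, and formula~\eqref{equ:psi} (which is linear past $t=1$) then produces $\ell(E)\le\beta=\max\{2,\, c_J\diam(D)/\phi(1)\}$.

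The main obstacle is verifying the distance inequality of Definition~\ref{bounded-john} along $E$, because the cigar bound $\dist(E(t),\partial D)\ge\psi(\min(t,\ell(E)-t))/c_J$ degenerates as $t\to\ell(E)$. I remedy this by truncating $E$ at the first point $y^*=E(s^*)$ with $|y^*-x_0|=r_0:=\tfrac{1}{2c_J}\psi(\diam(D)/4)$ (when $|x-x_0|\le r_0$ I simply use a straight segment) and appending the straight segment $[y^*,x_0]$, which lies in $B(x_0,R_0)\subset D$ and hence has $\dist(\cdot,\partial D)\ge r_0$. Continuity of $E$ gives $\ell(E)-s^*\ge|y^*-x_0|=r_0$, so $q(E(t))\ge\min(t,r_0)$ on the cigar portion. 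The $\psi$-John condition $\dist\ge(\ell/\alpha)\psi(t)$ then splits into two estimates: for $t\le r_0$ on the cigar portion one needs $\alpha\ge c_J\ell$; for $t>r_0$ or $t$ on the straight segment, using $\psi(t)\le\psi(\beta)=\phi(1)\beta$ together with $\dist\ge\psi(r_0)/c_J$ (respectively $\dist\ge r_0$), one needs $\alpha\ge c_J\phi(1)\beta^2/\psi(r_0)$. Properties (4)--(5) of $\phi$ ensure that the two requirements combine into the stated expression for $\alpha$.
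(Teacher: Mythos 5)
Your proposal follows essentially the same strategy as the paper in both directions: for the forward implication you concatenate the two John curves through $x_0$ and bound $q$ along each leg; for the converse you locate the John center as the midpoint of a cigar core joining a near-diametric pair, bound $\ell(E)$ by observing that the cigar's central ball must fit inside $D$, and then repair the degeneracy of the cigar width near $x_0$ by truncating the core and continuing along a straight segment to $x_0$. The only cosmetic difference is the truncation radius: the paper cuts $E$ at $\partial B(x_0,r)$ with $r=\psi(\tfrac14\diam D)/c_J$ and replaces the inner piece by a radius of that ball, whereas you cut at $\partial B(x_0,r/2)$ and append the chord; both moves serve identically to install the uniform lower bound $q\ge\min\{t,r/2\}$ on the surviving cigar portion and keep $\dist(\cdot,\partial D)$ bounded below on the straight tail, and both lead to $\alpha= c_J\phi(1)\beta^2/\psi(r/2)$, which is the stated constant. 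Your two-case split ($t\le r_0$ versus $t>r_0$ or the segment) and the use of property (4) to compare $r_0$ with $\psi(r_0)$ are the right ingredients; the argument is correct modulo the same order-one constant bookkeeping that the paper also elides.
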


Note that when $\diam(D) \to \infty$, then $\alpha \to \infty$ with the same speed as $\diam(D)$.

\begin{proof}
Assume first that $D$ is a $\psi$-John domain with a John center $x_0$. Let $a, b \in D$ and let the John curves $\gamma_1$  and $\gamma_2$ connect them to $x_0$,
respectively. We may assume that $a,b \in D \setminus B(x_0, \dist(x_0, \partial D))$, since inside the ball the points can be connect by two straight lines going via the center of the ball $B(x_0, \dist(x_0, \partial D))$.
Let $E =  \gamma_1 \circ \gamma_2$.  Then,  
\[
\begin{split}
&\Cig E(a, b)\\ &= \bigcup_{t \in (0, \ell(\gamma_1)]} B \left( \gamma_1(t),  \frac{\psi(t)}{\alpha/\dist(x_0, \partial D)} \right) \cup 
\bigcup_{t \in (0, \ell(\gamma_2)]} B \left( \gamma_2(t),  \frac{\psi(t)}{ \alpha/\dist(x_0, \partial D)} \right)\\
%&\subset \bigcup_{t \in (0, \ell(\gamma_1)]} B \left( \gamma_1(t),  \frac{\psi(t)}{\alpha/\ell(\gamma_1)} \right) \cup 
%\bigcup_{t \in (0, \ell(\gamma_2)]} B \left( \gamma_2(t),  \frac{\psi(t)}{ \alpha/\ell(\gamma_2)} \right) \subset D
\end{split}
\]
and thus $D$ is a $\varphi$-cigar John domain.

Assume then that $D$ is a $\varphi$-cigar John domain. 
Let us carefully choose  a suitable John center so that
the center is not too close to the boundary of $D$. Let $x, y \in D$ such that $|x-y|\ge \frac12 \diam(D)$. Let $E$ be a core of a John cigar that connects$x$ and $y$.
Then the length of $E$ is at least $\frac12 \diam(D)$. Let $x_0$ be the center of $E$.  Then
\[
\dist(x_0, \partial D) \ge \frac{\psi(\frac14 \diam(D))}{c_J}
\] 
so we choose 
$r= \psi \Big(\frac14 \diam(D) \Big)/c_J$, and hence $B(x_0, r) \subset D$.
From now on this $r$ and the point $x_0$ are fixed in this proof.
%$r= \frac{\psi(\frac12 \diam(D))}{c_J}$.

For every $a \in D \setminus\overline{ B(x_0, r)}$ there exists a curve $E$ such that  $\Cig E(a, x_0) \subset D$.
Let $\ell (E)$ be the length of $E$, then $\ell (E) \le 2$ or by the definition
\[
\diam(D) \ge 2 \frac{\psi(\ell(E)/2)}{c_J}= 2 \frac{\phi(1) \ell (E)}{2 c_J}
\]
i.e. $\ell (E) \le \max\left\{2, \frac{c_J \diam(D)}{\phi(1)} \right\}= \beta$.

\begin{figure}[ht!]
\includegraphics[width=11 cm]{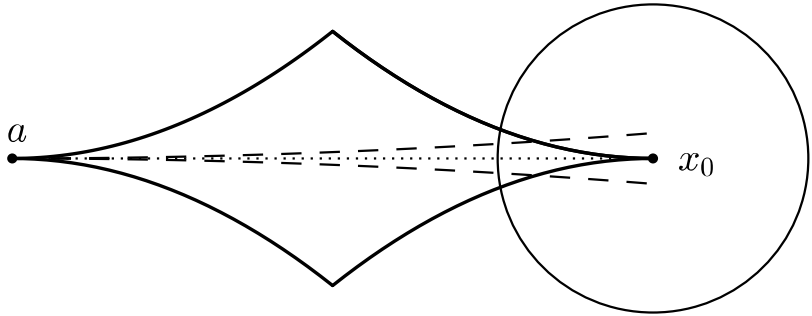}
\caption{The cigar from $a$ to $x_0$ (the solid line), the core $E$ (the dotted line) and a new carrot  given by the constant $c_J M$ (the dashed line).}
\end{figure}

Note that the length of $E$ inside the ball $B(x_0, r)$ is at least $r$ and thus for the points in $E \cap \partial B(x_0, r)$ the distance to the boundary is  at least  $\psi(r/2)$.
Let us choose that
 \[
M = \frac{\psi(\beta)}{\psi(r/2)}= \frac{\phi(1)\beta}{\psi(r/2)}.
 \]
Since $r \le \ell(E) \le \beta$ and $\psi$ is increasing, we have  $M \ge 1$.

Let $z_0 \in E$ be the first point from $a$ that satisfies $z_0 \in \partial B(x_0, r)$. Let us replace $E[z_0, x_0]$ by the radius of the ball $B(x_0, r)$, if necessary. Let us denote this new arc by $E$.
Let $\gamma$ be an arc $E$ parametrized by its curve length, such that $\gamma(0) = a$, $\gamma(\ell(E) ) = x_0$.  Since
\[
\frac{\psi(\ell(E))}{M c_J}  \le \frac{\psi(r/2)}{c_J}
\]
we obtain that
\[
\bigcup_{t \in(0, \ell(E) )}B \left(\gamma(t), \frac{\psi(t)}{M c_J} \right) \setminus B(x_0, r) \subset \Cig[a, x_0].
\]
This yields that
\[
\bigcup_{t \in(0, \ell(E))}B \left(\gamma(t), \frac{\psi(t)}{M c_J} \right)  \subset D
\]
and thus 
\[
\psi(t) \le M c_J \dist(\gamma(t), \partial D)
\le \frac{M c_J \beta}{\ell(E)} \dist(\gamma(t), \partial D).
\]
This yields that  we may choose $\alpha = M c_J \beta$. Thus, $D$ is a $\psi$-John domain with these $\alpha$ and $\beta$.
\end{proof}

%%%%%%%%%%%%%%%%%%%%%%%%%%%%%%%%%%%%%%%%%%%%%%%%%%%%%%%%%%%%%%%%%%%%%%%%%%%%%%%%%%%%%
%%%%%%%%%%%%%%%%%%%%%%%%%%%%%%%%%%%%%%%%%%%%%%%%%%%%%%%%%%%%%%%%%%%%%%%%%%%%%%%%%%%%%
%%%%%%%%%%%%%%%%%%%%%%%%%%%%%%%%%%%%%%%%%%%%%%%%%%%%%%%%%%%%%%%%%%%%%%%%%%%%%%%%%%%%%
\section{Point-wise estimates}

We note that  by the condition (4) of $\phi$ 
\begin{equation}\label{varphi_bdd}
\psi (t)\le  C_\phi \phi(1)  t  \quad \text{for all}   \quad  t  \ge 0.
\end{equation}

We recall a covering lemma from \cite[4.3. Lemma]{HH-S1} which is valid 
for a bounded $\varphi$-John domain.  For the previous versions in classical case we refer to \cite[Theorem 9.3]{Hajlasz-Koskela} and in a special case to \cite[Lemma 3.5]{HH-SK}.

\begin{lemma}\cite[4.3. Lemma]{HH-S1}.\label{lem:covering} 
Let $\phi$ satisfies the conditions (1)--(5).
Let $\psi  :[0,\infty )\to [0,\infty )$ be defined as in \eqref{equ:psi}.
 Let $D$ in $\Rn\,, n\geq 2\,,$ be a bounded $\psi$-John domain 
with  John constants $\alpha$ and $\beta$. Let  $x_0 \in D$ the John center. 
Then for every $x\in D\setminus B(x_0, \dist(x_0, \partial D))$ there exists a sequence of balls $\big(B(x_i, r_i)\big)$ such that $B(x_i, 2r_i)$ is in $D\,$ for each
$i=0,1,\dots\,,$ and
for some constants $K=K(\alpha, \dist(x_0, \partial D), \diam (D),\phi )$, $N=N(n)$, and $M=M(n)$ 
\begin{itemize}
\item
$B_0 = B\Big(x_0, \frac12 \dist(x_0,  \partial D)\Big)$;
\item
$\psi(\dist(x, B_i))\leq K r_i$, and $r_i \to 0 $ as $i\to \infty$;
\item
no point of the domain $D$ belongs to more than $N$ balls $B(x_i, r_i)$; and
\item
$|B(x_i, r_i) \cup B(x_{i+1}, r_{i+1})| \leq M |B(x_i, r_i) \cap B(x_{i+1}, r_{i+1})|$.
\end{itemize}
\end{lemma}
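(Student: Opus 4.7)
The plan is to construct the chain by a Whitney-type selection along the John curve. Let $\gamma:[0,\ell(\gamma)]\to D$ be the John curve of $x$ from Definition~\ref{bounded-john}, parametrised by arc length with $\gamma(0)=x$ and $\gamma(\ell(\gamma))=x_0$. I would set $B_0 = B(x_0, \tfrac12 \dist(x_0,\partial D))$, which satisfies $B(x_0, 2r_0)\subset D$ with $r_0=\tfrac12\dist(x_0,\partial D)$ trivially from the definition of distance-to-boundary. I would then pick a decreasing sequence $\ell(\gamma) > s_1 > s_2 > \dots$ of arc-length parameters with $s_i \downarrow 0$, set $x_i = \gamma(s_i)$, and choose
\[
r_i \;=\; \lambda \min\bigl\{\dist(x_i,\partial D),\, |x - x_i|\bigr\}
\]
for a small absolute constant $\lambda \in (0, \tfrac14)$, so that $B(x_i, 2r_i)\subset D$ and simultaneously $r_i \to 0$ as $x_i \to x$.

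The parameters $s_i$ are chosen inductively so that $|x_i - x_{i+1}|$ is a fixed fraction of $r_i$. This immediately yields that consecutive balls have comparable radii and comparable measures, so their union is controlled by their intersection, which is the fourth bullet. The bounded-overlap bullet follows from the fact that, by the step rule, $r_i$ decays at a geometric rate along the curve, so a standard packing argument shows that at most $N=N(n)$ of the balls can meet at a single point.

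The essential bullet is the second, where the John condition enters. Because $\gamma$ runs from $x$ to $x_0$, the arc-length estimate gives $\dist(x, B_i) \le |x - x_i| \le s_i$. Applying Definition~\ref{bounded-john} at the parameter $s_i$ and then the defining inequality for $r_i$, one obtains
\[
\psi(\dist(x, B_i)) \le \psi(s_i) \le \frac{\alpha}{\ell(\gamma)} \dist(x_i,\partial D) \le \frac{\alpha}{\lambda\, \ell(\gamma)}\, r_i
\]
modulo a finite $\Delta_2$-factor that absorbs the $|x - x_i|$ cut-off in $r_i$. Tracking the dependencies ($\ell(\gamma)$ is bounded above by $\beta$ and below by $\dist(x_0,\partial D)$, and the $\Delta_2$-constant of $\phi$ quantifies the absorbed factor) produces a constant $K=K(\alpha,\dist(x_0,\partial D),\diam(D),\phi)$ of the claimed form. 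The required condition $r_i \to 0$ is automatic from $|x - x_i| \to 0$ via the cut-off in the definition of $r_i$.

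The main technical point is calibrating $r_i$ so that \emph{all four} bullets hold simultaneously: the plain Whitney rule $r_i = \lambda \dist(x_i,\partial D)$ secures $B(x_i, 2r_i)\subset D$ but can fail $r_i \to 0$ when $x$ is an interior point, while forcing $r_i \lesssim |x - x_i|$ forces decay to zero but then requires the $\Delta_2$-property of $\phi$ to keep the constant in the second bullet under control after enlargement. Since the statement is quoted verbatim from \cite[4.3. Lemma]{HH-S1}, the detailed verification can be deferred to that reference.
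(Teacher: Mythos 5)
The paper does not prove this lemma; it states it and cites \cite[4.3. Lemma]{HH-S1}, so there is no in-paper argument to compare against. Your sketch is the expected genus of proof---a Whitney chain sampled along the John curve---and the radius rule $r_i=\lambda\min\{\dist(x_i,\partial D),\,|x-x_i|\}$ is the right calibration: it secures $B(x_i,2r_i)\subset D$ and $r_i\to 0$ at once, and your case analysis for the second bullet does close once the cases are separated. One small correction there: when the minimum is attained by $|x-x_i|$ you need $\psi(|x-x_i|)\lesssim|x-x_i|$, and the constant that enters is $C_\phi\,\phi(1)$ from property~(4) (this is exactly \eqref{varphi_bdd}), not the $\Delta_2$-constant; properties (4) and (5) are distinct and it is (4) that is doing the work here.

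The genuine gap is the bounded overlap. Under your step rule $|x_i-x_{i+1}|=c\,r_i$, consecutive radii are merely \emph{comparable}, so $r_i$ does not decay geometrically per step; it decreases only when the curve actually progresses toward $x$ or toward $\partial D$, and Definition~\ref{bounded-john} permits the John curve to oscillate. For a fixed $y\in D$ one can show, as you begin to indicate, that every $i$ with $y\in B(x_i,r_i)$ has $r_i\approx\min\{\dist(y,\partial D),\,|x-y|\}$; but if the curve passes close to $y$ many times there is no bound on the number of such $i$ depending only on $n$, so equal spacing along an arbitrary John curve does not give $N=N(n)$. To obtain the third bullet one must build some monotonicity into the selection---for instance sample the curve at the successive first (or last) arclength parameters at which $|x-\gamma(s)|$ crosses dyadic levels, or route the chain through a fixed Whitney decomposition of $D$ and discard repeated cubes. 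That extra selection step is where the real content of the cited lemma lies, and it is absent from your sketch.
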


\begin{remark} (1)
The constant $K$ in the previous lemma can be taken to be
$K=\max\{2\alpha/\dist (x_0,\partial D), 2\phi(1), \phi(\diam (D))/\diam (D)\}.$\\
(2) If $D$ is a $\phi$-cigar John domain and the John center has been chosen as in  Theorem~\ref{thm:John-John}, then 
\[
\frac{\alpha}{\dist(x_0, \partial D)}\le \frac{c_J^2\, \phi(1) \left(\max\left\{2, \frac{c_J \diam(D)}{\phi(1)} \right\} \right)^2}{\psi \left( \frac1{2c_J} \psi \left(\frac14 \diam(D)\right)\right) \psi\left(\frac14 \diam(D)\right)}
\to \frac{32 c_J^5}{\phi(1)^4}
\]
as $\diam(D) \to \infty$.
\end{remark}

We recall the following definitions.
Let $G$ be an open set of $\Rn$.
We denote the Lebegue space by $L^{p}(G)$, $1\le p < \infty$. By $L^1_p(G)$, $1\le p < \infty$, we denote those locally integrable functions whose first weak distributional derivatives belongs to $L^p(G)$ i.e.\  $L^1_p(G) =\left \{ u \in L^1_{\textup{loc}}(G): |\nabla u| \in L^p(G) \right\}$.
By $W^{1,p}(G)$, $1\le p < \infty$, we denote those functions from $L^p(G)$ whose first weak distributional derivatives belongs to $L^p(G)$ i.e.\  $W^{1,p}(G) =\left \{ u \in L^p(G): |\nabla u| \in L^p(G) \right\}$.

Theorem~\ref{thm:John-John} and  Lemma~\ref{lem:covering}  give the following point-wise estimate which we recall from 
\cite[4.4. Theorem]{HH-S1}.

\begin{theorem}\label{thm:Riesz}
Let $\phi$ satisfy the conditions (1)--(5).
Let $\psi  :[0,\infty )\to [0,\infty )$ be as defined in \eqref{equ:psi}.
 Let $D$ in $\Rn\,, n\geq 2\,,$ be a bounded $\phi$-cigar John domain 
with a John constant $c_J$ .
Then there exists a finite constant $C$ and $x_0 \in D$ such that for every $u\in L^1_1(D)$ and for almost every $x\in D$ 
the inequality
\[
\big|u(x) - u_{B(x_0,  \dist(x_0, \partial D))}\big| \leq C \int_{D}  \frac{|\nabla u(y)|}{\psi \big( |x-y|\big)^{n-1}} \, dy
\]
holds.
Here
\begin{equation*}
C = c\left(n, c_J, C_\phi, C_\phi^{\Delta_2}, \phi(1), \min\bigg\{\diam(D), 1\bigg\}\right).
\end{equation*}
\end{theorem}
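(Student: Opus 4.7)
The plan is to combine Theorem~\ref{thm:John-John} with a chain-of-balls argument built on Lemma~\ref{lem:covering}. By Theorem~\ref{thm:John-John}, the $\phi$-cigar John assumption promotes $D$ to a bounded $\psi$-John domain with John constants $\alpha$, $\beta$ depending quantitatively on $c_J$, $\phi(1)$ and $\diam(D)$. Fix the associated John centre $x_0$ and abbreviate $B^\ast = B(x_0, \dist(x_0,\partial D))$ and $B_0 = B(x_0, \tfrac12\dist(x_0,\partial D))$. For $x \in B^\ast$ the result is immediate: the classical $(1,1)$-Poincar\'e inequality on $B^\ast$ controls $|u(x)-u_{B^\ast}|$ for a.e.\ such $x$, while $\psi(|x-y|)$ is uniformly bounded for $y \in B^\ast$ so the integrand $|\nabla u(y)|/\psi(|x-y|)^{n-1}$ absorbs the resulting bound. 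It therefore suffices to treat $x \in D \setminus B^\ast$.

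For such $x$, Lemma~\ref{lem:covering} delivers balls $B_i = B(x_i, r_i)$ with $B(x_i,2r_i) \subset D$, bounded overlap $N(n)$, chaining $|B_i \cup B_{i+1}| \le M |B_i \cap B_{i+1}|$, and most importantly $\psi(\dist(x, B_i)) \le K r_i$ together with $r_i \to 0$. Lebesgue differentiation gives $u_{B_i} \to u(x)$ for a.e.\ $x$, so a telescoping argument yields
\[
|u(x) - u_{B_0}| \le \sum_{i=0}^\infty |u_{B_i} - u_{B_{i+1}}|.
\]
Applying the $(1,1)$-Poincar\'e inequality on the enlarged ball $B(x_i,2r_i) \supset B_i \cup B_{i+1}$ together with the chaining property gives the standard single-ball estimate
\[
|u_{B_i} - u_{B_{i+1}}| \le \frac{c(n,M)}{r_i^{n-1}} \int_{B(x_i, 2r_i)} |\nabla u(y)| \, dy.
\]

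The next step converts this sum into a Riesz-type integral. I would prove the pointwise comparison $\psi(|x-y|) \le c\, r_i$ for every $y \in B(x_i, 2r_i)$, with $c$ depending only on $K$, $C_\phi$, $C_\phi^{\Delta_2}$ and $\phi(1)$; equivalently, $r_i^{-(n-1)} \le c\, \psi(|x-y|)^{-(n-1)}$ on $B(x_i, 2r_i)$. Inserting this into the previous line and summing yields
\[
|u(x) - u_{B_0}| \le c \sum_{i=0}^\infty \int_{B(x_i, 2r_i)} \frac{|\nabla u(y)|}{\psi(|x-y|)^{n-1}} \, dy,
\]
and the bounded overlap of the dilates $B(x_i,2r_i)$ (inherited from that of $B_i$ up to a dimensional constant) collapses the sum of integrals into a single integral over $D$. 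A final Poincar\'e estimate on $B^\ast$ absorbs the difference $|u_{B_0} - u_{B^\ast}|$ into the same right-hand side, which yields the stated inequality.

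The main obstacle is the pointwise comparison $\psi(|x-y|) \le c\, r_i$, because Lemma~\ref{lem:covering} only bounds $\psi$ of the distance from $x$ to $B_i$, whereas $|x-y|$ can be as large as $\dist(x,B_i)+2r_i$. I would split into two regimes. If $\dist(x,B_i) \ge 4 r_i$, then the triangle inequality gives $|x-y| \le 2\dist(x,B_i)$ and one application of the $\Delta_2$-condition on $\psi$ yields $\psi(|x-y|) \le C_\phi^{\Delta_2} \psi(\dist(x,B_i)) \le C_\phi^{\Delta_2} K r_i$. If instead $\dist(x,B_i) < 4 r_i$, then $|x-y| \le 6 r_i$, and I would use $\Delta_2$ repeatedly to reduce $\psi(6 r_i)$ to a multiple of $\psi(r_i)$, and then invoke condition~(4) together with the linear behaviour of $\psi$ beyond $t=1$ to control $\psi(r_i)$ by $r_i$ itself (the two cases $r_i \le 1$ and $r_i > 1$ are handled separately). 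Tracking constants through these two regimes gives precisely the stated dependence of $C$ on $n$, $c_J$, $C_\phi$, $C_\phi^{\Delta_2}$, $\phi(1)$, and $\min\{\diam(D),1\}$.
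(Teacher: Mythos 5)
Your overall strategy --- passing through Theorem~\ref{thm:John-John} to promote $D$ to a bounded $\psi$-John domain, invoking Lemma~\ref{lem:covering} to get a chain, telescoping, and converting $r_i^{-(n-1)}$ into $\psi(|x-y|)^{-(n-1)}$ --- is exactly the route the paper indicates (it states that Theorem~\ref{thm:John-John} and Lemma~\ref{lem:covering} together give this result, which is recalled from \cite[4.4. Theorem]{HH-S1}). The two-regime analysis of the pointwise comparison $\psi(|x-y|)\lesssim r_i$ is the right way to turn the lemma's bound $\psi(\dist(x,B_i))\le K r_i$ into what you actually need, and the dependence of the constant on $c_J$, $C_\phi$, $C_\phi^{\Delta_2}$, $\phi(1)$, $\min\{\diam D, 1\}$ is traceable through $K$, $\alpha/\dist(x_0,\partial D)$, and $\beta$.

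There is, however, one step that fails as written: the base case $x\in B^\ast = B(x_0,\dist(x_0,\partial D))$, which is needed because Lemma~\ref{lem:covering} only supplies a chain for $x\in D\setminus B^\ast$. You assert that the $(1,1)$-Poincar\'e inequality on $B^\ast$ ``controls $|u(x)-u_{B^\ast}|$ for a.e.\ such $x$.'' It does not: the $(1,1)$-Poincar\'e inequality bounds $\vint_{B^\ast}|u-u_{B^\ast}|$, an average, and gives no pointwise information. For the pointwise statement one needs the classical Riesz potential representation in a ball (e.g.\ \cite[Lemma 7.16]{Gilbarg-Trudinger}): $|u(x)-u_{B^\ast}|\le C(n)\int_{B^\ast}|\nabla u(y)|\,|x-y|^{1-n}\,dy$ for a.e.\ $x\in B^\ast$, and then the bound $\psi(t)\le C_\phi\,\phi(1)\,t$ of inequality~\eqref{varphi_bdd} to replace $|x-y|^{1-n}$ by $C\,\psi(|x-y|)^{1-n}$. (Your earlier remark that ``$\psi(|x-y|)$ is uniformly bounded'' is not the relevant fact; what matters is the linear upper bound on $\psi$, which gives a \emph{lower} bound $\psi(|x-y|)^{1-n}\gtrsim|x-y|^{1-n}$.) With this replacement the rest of your argument goes through. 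One further point worth flagging but not a fatal flaw: the bounded overlap of the dilates $B(x_i,2r_i)$ is not generically ``inherited up to a dimensional constant'' from that of the $B_i$; you need to use the comparability of consecutive radii that follows from the measure condition $|B_i\cup B_{i+1}|\le M\,|B_i\cap B_{i+1}|$, so the resulting overlap constant depends on $n$, $N$, and $M$, not on $n$ alone.
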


We recall the definitions of $N$-functions and Orlicz spaces.

\begin{definition}
A  function $H:[0, \infty) \to [0, \infty)$ is an $N$-function if
\begin{enumerate}
\item[(N1)] $H$ is continuous,
\item[(N2)] $H$ is convex,
\item[(N3)] $\lim_{t \to 0^+}\frac{H(t)}{t} =0$ and $\lim_{t \to \infty}\frac{H(t)}{t} =\infty$.
\end{enumerate}
\end{definition}

Continuity and $\lim_{t \to 0^+}\frac{H(t)}{t} =0$ yield that $H(0)=0$. Let $0<t<s$ by convexity
\[
H(t) = H\left(\frac{t}{s} s + \left(1- \frac{t}{s} \right)0 \right)
\le \frac{t}{s} H(s) + \left(1- \frac{t}{s} \right) H(0)
\]
and thus $\frac{H(t)}{t} \le \frac{H(s)}{s}$ for $0<t<s$.

This implies that $H$ is a strictly increasing function.

By the notation $f\lesssim g$ we mean that there exists a constant 
$C>0$ such that $f(x)\le C g(x)$ for all $x$. The notation $f\approx g$ means that 
$f\lesssim g\lesssim f$.

Two $N$-functions $H$ and $K$ are equivalent, which is written as
$H\simeq K$, if there exists $m\ge 1$ such that 
$H(t/m)\le K(t)\le H(mt)$ for all $t>0$.
Equivalent $N$-functions give the same space with 
comparable norms. 
We point out that $H\simeq K$ if and only if for the inverse functions $H^{-1}\approx K^{-1}$.

We  assume that $H$ satisfies the $\Delta_2$-condition, that is, there exists a constant
$C^{\Delta_2}_H$ such that
\begin{equation}\label{H_doubling}
H (2t)\le C^{\Delta_2}_H H(t) \quad \text{for all} \quad t>0.
\end{equation}
If an $N$-function satisfies the $\Delta_2$-condition then the relations
$\simeq$ and $\approx$ are equivalent. 
The constant $C^{\Delta_2}_H$ is called the 
$\Delta_2$-constant of $H$.

Let $G$ in $\Rn$ be an open set.
The Orlicz class is a set of all measurable functions
$u$
defined on $G$
such that
\[
\int_G H \Big(|u(x)| \Big) \, dx < \infty\,.
\]
We study the Orlicz space $L^H (G)$ which means 
the space of all measurable functions
$u$ defined on $G$ such that 
\[
\int_G H \Big(\lambda |u(x)| \Big) \, dx < \infty
\]
for some  $\lambda >0$.

Whenever the function $H$
satisfies the $\Delta_2$-condition, then the space $L^H (G)$ is a vector space and it is 
equivalent to the corresponding Orlicz class.
We study these Orlicz spaces and call their functions Orlicz functions.
The Orlicz space
$L^H (G)$ equipped with the Luxemburg norm
\[
\|u\|_{L^\Phi(G)} = \inf \left\{\lambda >0: \int_G \Phi\left ( \frac{|u(x)|}{\lambda}\right)\, dx\le 1 \right\}
\]
is a Banach space.

We recall the following theorem from 
\cite[1.3. Theorem]{HH-S1}.

\begin{theorem}\label{thm:pointwise-maximal}
Let $\phi$ satisfy the conditions (1)--(5).
Let $\psi  :[0,\infty )\to [0,\infty )$ be defined as in \eqref{equ:psi}.
Let $1\le p<n$ be given.
Suppose that
there exists a continuous function $h:[0,\infty )\to [0, \infty)$ such that 
\begin{equation}\label{h_sum}
\sum_{k=1}^{\infty} \frac{(2^{-k}t )^n}{\psi (t 2^{-k})^{n-1}}
\le h(t ) \quad \text{ for all }\quad t>0\,.
\end{equation}
Let $\delta :(0,\infty )\to [0,\infty )$ be a continuous function and
let $H: [0,\infty )\to [0, \infty)$ be an N-function satisfying the $\Delta _2$-condition. Suppose that
there exists a finite constant $C_H$ such that the inequality 
\begin{equation}\label{sum}
H\left(h(\delta (t)) t + 
\psi (\delta (t) )^{1-n}(\delta(t)) ^{n(1-\frac{1}{p})} \right)\le
C_H t^p 
\end{equation}
holds for all $t>0$.
Let $G$ in $\Rn$ be an open set.
If $\| f \|_{L^p (G)} \le 1$, then 
there exists  a constant $C$ such that 
the inequality
\begin{equation}\label{main_riesz_inequality}
H\left ( \int_{G}  \frac{|f(y)|}{\psi(  |x-y|)^{n-1}} \, dy\right )
\le C (M f (x))^p
\end{equation}
holds for every $x\in \Rn$. Here the constant $C$ depends on $n$, $p$, $C_\varphi$, $C_H$, and the $\Delta_2$-constants of $\phi$ and $H$ only.
\end{theorem}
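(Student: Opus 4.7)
My plan is to follow Hedberg's truncation method adapted to the modified kernel $\psi(|x-y|)^{-(n-1)}$. Fix $x \in \mathbb{R}^n$, set $t = Mf(x)$ (which I may assume to be positive and finite, otherwise the estimate is trivial), and write $r = \delta(t)$. After extending $f$ by zero outside $G$ I would split
\begin{equation*}
I(x) := \int_{G} \frac{|f(y)|}{\psi(|x-y|)^{n-1}} \, dy = I_1(x) + I_2(x),
\end{equation*}
where $I_1$ is the integral over $B(x, r)$ and $I_2$ is the integral over $\mathbb{R}^n \setminus B(x, r)$. The aim is to prove $I_1 \lesssim h(r) t$ and $I_2 \lesssim \psi(r)^{1-n} r^{n(1-1/p)}$, so that assumption \eqref{sum}, together with the $\Delta_2$-condition on $H$, closes the argument.

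For the near part $I_1$ I would use the standard dyadic annular decomposition $\{r 2^{-j} \leq |x-y| < r 2^{-j+1}\}$ for $j \geq 1$. On each shell the kernel is dominated, since $\psi$ is increasing, by $\psi(r 2^{-j})^{-(n-1)}$, while $\int_{B(x, r 2^{-j+1})} |f| \lesssim (r 2^{-j})^n Mf(x)$ by the very definition of the maximal function. Summing and absorbing the radius-doubling factors yields
\begin{equation*}
I_1(x) \lesssim Mf(x) \sum_{j=1}^{\infty} \frac{(r 2^{-j})^n}{\psi(r 2^{-j})^{n-1}} \leq h(r) \, Mf(x) = h(\delta(t)) \, t,
\end{equation*}
by hypothesis \eqref{h_sum}. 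The $\Delta_2$-condition on $\psi$, inherited from $\phi$, keeps the implied constants finite and of the claimed dependence.

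For the far part $I_2$ I would apply H\"older's inequality with conjugate exponents $(p, p')$ when $1 < p < n$, and the straight $L^\infty$ bound when $p=1$. The key integral to control is
\begin{equation*}
\int_{|x-y| \geq r} \psi(|x-y|)^{-(n-1)p'} \, dy = c_n \int_r^{\infty} \psi(s)^{-(n-1)p'} s^{n-1} \, ds,
\end{equation*}
which I would estimate by the essentially-monotone lower bound coming from condition (4): $\psi(s) \geq C_\varphi^{-1}(s/r)\psi(r)$ for $s \geq r$, hence $\psi(s)^{-(n-1)p'} \leq C(r/s)^{(n-1)p'}\psi(r)^{-(n-1)p'}$. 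Since $p<n$ is equivalent to $(n-1)p' > n$, the remaining power integral $\int_r^{\infty} s^{n-1-(n-1)p'}\,ds$ converges and is $\lesssim r^{n-(n-1)p'}$, producing the $L^{p'}$-norm bound $\psi(r)^{-(n-1)} r^{n/p'} = \psi(r)^{1-n} r^{n(1-1/p)}$. Multiplying by $\|f\|_{L^p(G)} \leq 1$ gives the claimed bound on $I_2$, and the $p=1$ case is absorbed into the same formula since $n(1-1/p)=0$ in that limit.

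Combining the two halves produces $I(x) \leq C \bigl( h(\delta(t)) t + \psi(\delta(t))^{1-n} \delta(t)^{n(1-1/p)} \bigr)$. Applying the increasing $N$-function $H$ and using $H(Cs) \leq C' H(s)$ from its $\Delta_2$-constant, hypothesis \eqref{sum} yields $H(I(x)) \leq C''(Mf(x))^p$, as required. The main technical point I expect is bookkeeping the constants in the dyadic tail estimate so that the dependence is only on $n$, $p$, $C_\varphi$, and the $\Delta_2$-constants, particularly balancing the usage of condition (4) in the increasing direction for $I_2$ against the $\Delta_2$-comparisons at small scales in $I_1$; I do not foresee any deeper structural obstacle.
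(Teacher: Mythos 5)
Your proof is correct and follows the Hedberg truncation method that the paper explicitly identifies as the underlying technique (the theorem is recalled from \cite[1.3.~Theorem]{HH-S1}, and the introduction of the present paper states the result is obtained by extending Hedberg's method \cite{Hed72}). The decomposition at radius $\delta(Mf(x))$, the dyadic shell estimate against $Mf$ for the near part feeding into \eqref{h_sum}, the H\"older/essential-monotonicity bound via condition (4) for the tail, and the final appeal to \eqref{sum} together with $\Delta_2$ of $H$ are exactly the intended argument.
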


Our goal is to find a formula  which
would give all suitable functions $H$. Examples of some of these
functions were given in \cite[Section 6]{HH-S1}.

Here we do the preparations to find  $H$.
Assume that there exists $\alpha \in[1, {n}/{(n-1)})$ such that
$t^\alpha/\phi(t)$ is increasing for $t>0$. This yields that $t^\alpha/\psi(t)$ is increasing, too. Under this condition  inequality \eqref{h_sum} holds: Since
\[
\begin{split}
\frac{(2^{-k}t )^n}{\psi (t 2^{-k})^{n-1}} 
&= \frac{(2^{-k}t )^n}{(2^{-k}t )^{\alpha(n-1)}} \cdot \frac{(2^{-k}t )^{\alpha(n-1)}}{\psi (t 2^{-k})^{n-1}}\\ 
&\le (2^{-k}t )^{n- \alpha(n-1)} \frac{t^{\alpha(n-1)}}{\psi(t)^{n-1}} = 2^{-k(n-\alpha(n-1))} \frac{t^n}{\psi(t)^{n-1}},
\end{split}
\]
we have
\[
\sum_{k=1}^{\infty} \frac{(2^{-k}t )^n}{\psi (t 2^{-k})^{n-1}} \le C(n, \alpha) \frac{t^n}{\psi(t)^{n-1}},
\quad \text{where} \quad C(n, \alpha) = \frac{2^{\alpha(n-1)}}{2^n - 2^{\alpha(n-1)}}.
\]

Let us define  the functions $h$ and $\delta$ such that
\[
h(t) = C(n, \alpha) \frac{ t^n}{\psi(t)^{n-1}} \quad \text{and} \quad \delta(t) = t^{- \frac{p}{n}} \quad \text{for all} \quad t>0. 
\]
Then,
\[
\begin{split}
h(\delta (t)) t + 
\psi (\delta (t) )^{1-n}(\delta(t)) ^{n(1-\frac{1}{p})}
&= h \left( t^{- \frac{p}{n}}\right) t +  \psi \left(t^{- \frac{p}{n}} \right)^{1-n}\left(t^{- \frac{p}{n}} \right) ^{n(1-\frac{1}{p})}\\
&= \frac{C(n,\alpha) t^{-p}}{\psi\left(t^{- \frac{p}{n}} \right)^{n-1}} t  + \frac{t^{1-p}}{\psi \left(t^{- \frac{p}{n}} \right)^{n-1}}\\
&= \frac{(C(n, \alpha) +1) t^{1-p}}{\psi \left(t^{- \frac{p}{n}} \right)^{n-1}}.
\end{split}
\]
If we choose 
\[
F^{-1}(t) = \frac{ (C(n, \alpha) +1) (t^{1/p})^{1-p}}{\psi \left((t^{1/p})^{- \frac{p}{n}} \right)^{n-1}} = \frac{(C(n, \alpha) +1) t^{\frac1p-1}}{\psi \left(t^{- \frac{1}{n}} \right)^{n-1}}
\]
and assume that the inverse function of $F^{-1}$ exists, 
that is 
$(F^{-1})^{-1}=:F$ exists,
then 
\[
h(\delta (t)) t + 
\psi (\delta (t) )^{1-n}(\delta(t)) ^{n(1-\frac{1}{p})} = F^{-1}(t^p)
\]
and thus 
\[
F\left(h(\delta (t)) t + 
\psi (\delta (t) )^{1-n}(\delta(t)) ^{n(1-\frac{1}{p})}\right) = F \left(F^{-1}(t^p) \right) = t^p.
\]
Unfortunately,
there is a problem  with this function $F$  to be a suitable function $H$; namely, the function $F$ is not necessary convex. For example, if $n=2$, $\phi(t) = t^{\frac32}$, and $p=1.9$, then the function $F$ is not convex, see Figure~\ref{fig:not_convex}. The angle at the point $(1, F^{-1}(1))$ comes from the angle of  $\psi$ at the point $(1, \psi(1))$. 
Our main theorem, Theorem~\ref{thm:defn_H_intro} in Introduction, corrects this point: we show that there exists an $N$-function $H$ that is equivalent with $F$.

\begin{figure}[ht!]
\includegraphics[width=6cm]{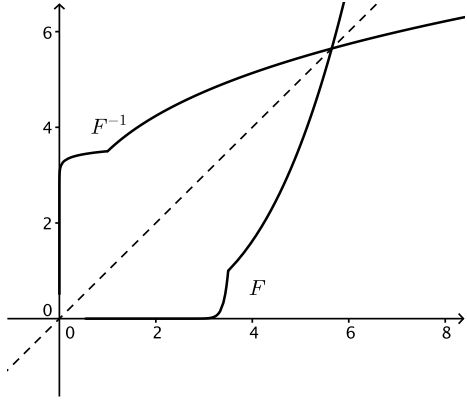}
\caption{The function $F$ is not necessary convex.}\label{fig:not_convex}
\end{figure}

\begin{proof}[Proof of Theorem~\ref{thm:defn_H_intro}]
Let us write that
\[
F^{-1}(t)  = \frac{t^{\frac1p-1}}{\psi \left(t^{- \frac{1}{n}} \right)^{n-1}}
\]
for $t>0$ and $F^{-1}(0)=0$.
Let us first show that $F^{-1}$ is strictly increasing.
Assume then that $0 < s <t$. The inequality $F^{-1}(s) < F^{-1}(t)$ is equivalent to the inequality
\[
\frac{\psi\left( \left(\frac1t \right)^{\frac1n} \right )^{n-1}}{\left( \frac1t\right)^{1- \frac1p}} <
\frac{\psi\left( \left(\frac1s \right)^{\frac1n} \right )^{n-1}}{\left( \frac1s\right)^{1- \frac1p}}.
\]
Recall that if $\phi$ satisfies the condition $(4)$, then $\psi$ does, too,  and the constant is the same for both functions. Thus by the condition (4) and the inequality $p< n$ we obtain
\[
\begin{split}
\frac{\psi\left( \left(\frac1t \right)^{\frac1n} \right )^{n-1}}{\left( \frac1t\right)^{1- \frac1p}} &= \left(\frac{\psi\left( \left(\frac1t \right)^{\frac1n} \right )}{\left( \frac1t\right)^{\frac1n}} \right)^{n-1} \left(\frac1t \right)^{\frac{n-1}n - 1 + \frac1p} \le \left(\frac{\psi\left( \left(\frac1s \right)^{\frac1n} \right )}{\left( \frac1s\right)^{\frac1n}} \right)^{n-1}\left(\frac1t \right)^{\frac{n-1}n - 1 + \frac1p} \\
&< \left(\frac{\psi\left( \left(\frac1s \right)^{\frac1n} \right )}{\left( \frac1s\right)^{\frac1n}} \right)^{n-1} \left(\frac1s \right)^{\frac{n-1}n - 1 + \frac1p}
= \frac{\psi\left( \left(\frac1s \right)^{\frac1n} \right )^{n-1}}{\left( \frac1s\right)^{1- \frac1p}}.
\end{split}
\]
Thus the function $F^{-1}$ is strictly increasing. This yields that the function $F$ exists and is  strictly increasing.

Let us show that $\lim_{t \to 0^+} F^{-1}(t) = 0$. Since $p<n$ we obtain
\[
\begin{split}
\lim_{t \to 0^+}F^{-1}(t)  = \lim_{t \to 0^+}\frac{t^{\frac1p-1}}{\psi \left(t^{- \frac{1}{n}} \right)^{n-1}} = \lim_{t \to 0^+} \phi(1)^{1-n} t^{\frac{n-1}{n}+ \frac1p -1} =0.
\end{split}
\]
Let us show that $\lim_{t \to \infty} F^{-1}(t) = \infty$. Since $t/\phi(t)$ is decreasing,  by the condition (4), and by  $p< n$ we obtain
\[
\lim_{t \to \infty}F^{-1}(t)  = \lim_{t \to \infty}\frac{t^{\frac1p-1}}{\psi \left(t^{- \frac{1}{n}} \right)^{n-1}}
=\lim_{t \to \infty} t^{\frac1p-\frac1n} \left (\frac{t^{- \frac1n}}{\psi \left(t^{- \frac{1}{n}} \right) }\right)^{n-1} \ge \lim_{t \to \infty}  \frac{t^{\frac1p-\frac1n}}{\phi (1 )^{n-1}}=  \infty.
\]
We have shown that $F^{-1}:[0, \infty) \to [0, \infty)$ is bijective.

Let us then study the condition  
\begin{equation}\label{equ:N/t_increasing}
\frac{F(s)}{s} < \frac{F(t)}{t} \quad\text{for}\quad 0<s<t.
\end{equation}  
Since $F^{-1}$ is a strictly increasing bijection,  inequality \eqref{equ:N/t_increasing}  is equivalent to
\[
\frac{s}{F^{-1}(s)} < \frac{t}{F^{-1}(t)}.
\]
 Since $t^\alpha/\phi(t)$ is  increasing, then $\phi(t)/t^\alpha$ is  decreasing and $\psi(t)/t^\alpha$ is  decreasing, too. We note that $1- \frac{\alpha(n-1)}{n}>0$, since $\alpha < \frac{n}{n-1}$. We obtain
\[
\begin{split}
\frac{s}{F^{-1}(s)} &=  s^{2-\frac1p} \psi\left(s^{-\frac1n}\right)^{n-1}
=  s^{2-\frac1p-\frac{\alpha(n-1)}{n}}  \left( \frac{\psi\left(s^{-\frac1n}\right)}{\left(s^{-\frac1n}\right)^\alpha} \right)^{n-1}\\
&=  s^{1-\frac1p + 1-\frac{\alpha(n-1)}{n}}  \left( \frac{\psi\left(s^{-\frac1n}\right)}{\left(s^{-\frac1n}\right)^\alpha} \right)^{n-1}
< t^{1-\frac1p + 1-\frac{\alpha(n-1)}{n}}  \left( \frac{\psi\left(t^{-\frac1n}\right)}{\left(t^{-\frac1n}\right)^\alpha} \right)^{n-1}
= \frac{t}{F^{-1}(t)}
\end{split}
\]
and thus inequality \eqref{equ:N/t_increasing} holds.

Let us then show that $F^{-1}(cs) \ge 2 F^{-1} (s)$ for all $s \ge 0$ with $c= 2^\frac{np}{n-p}$. 
The inequality $F^{-1}(cs) \ge 2 F^{-1} (s)$ is equivalent to 
\[
2\frac{\psi\left( \left(\frac1{cs} \right)^{\frac1n} \right )^{n-1}}{\left( \frac1{cs}\right)^{1- \frac1p}} \le
\frac{\psi\left( \left(\frac1s \right)^{\frac1n} \right )^{n-1}}{\left( \frac1s\right)^{1- \frac1p}}.
\]
By the condition $(4)$ of $\phi$  and the inequality $p< n$, we obtain
\[
\begin{split}
2\frac{\psi\left( \left(\frac1{cs} \right)^{\frac1n} \right )^{n-1}}{\left( \frac1{cs}\right)^{1- \frac1p}} &= 2 \left(\frac{\psi\left( \left(\frac1{cs} \right)^{\frac1n} \right )}{\left( \frac1{cs}\right)^{\frac1n}} \right)^{n-1} \left(\frac1{cs} \right)^{\frac{n-1}n - 1 + \frac1p}
=  \left(\frac{\psi\left( \left(\frac1{cs} \right)^{\frac1n} \right )}{\left( \frac1{cs}\right)^{\frac1n}} \right)^{n-1} \left(\frac1{s} \right)^{\frac{n-1}n - 1 + \frac1p}\\
&\le \left(\frac{\psi\left( \left(\frac1s \right)^{\frac1n} \right )}{\left( \frac1s\right)^{\frac1n}} \right)^{n-1} \left(\frac1s \right)^{\frac{n-1}n - 1 + \frac1p}
= \frac{\psi\left( \left(\frac1s \right)^{\frac1n} \right )^{n-1}}{\left( \frac1s\right)^{1- \frac1p}}.
\end{split}
\]
The inequality $F^{-1}(cs) \ge 2 F^{-1} (s)$ yields that $F$ satisfies the $\Delta_2$-condition. Let us write $F(t) =s$. Then $F^{-1}(s) =t$. Since $F$ is increasing, we have
\[
F(2t) = F(2F^{-1}(s)) \le F(F^{-1}(cs)) = cs = c F(t). 
\]

P.\ H\"ast\"o has shown in \cite[Proposition~5.1]{Hasto} that if  $f:[0, \infty) \to [0, \infty)$ satisfies the $\Delta_2$-condition and $x \mapsto {f(x)}/{x}$ is increasing, then $f$ is equivalent to a convex function.
Since $F$ satisfies inequality \eqref{equ:N/t_increasing} and the $\Delta_2$-condition, we obtain  that $F$ is equivalent to a convex function $H$.

Using 
$\lim_{t \to 0^+} F^{-1}(t) =0$ and the bijectivity,
we obtain
\[
\begin{split}
\lim_{t \to 0^+}\frac{F(t)}{t} &= \lim_{t \to 0^+}\frac{t}{F^{-1}(t)}
= \lim_{t \to 0^+} \frac{t\, \psi\left( \left(\frac1t \right)^{\frac1n} \right )^{n-1}}{\left( \frac1t\right)^{1- \frac1p}} = \lim_{t \to 0^+} \phi(1)^{n-1} t^{1-\frac1p + 1 - \frac{n-1}{n}} =0 
\end{split}
\]
and thus also $\lim_{t \to 0^+}\frac{H(t)}{t} =0$. This gives that $H$ is right continuous at the origin. Thus by convexity the function $H$ is continuous on $[0, \infty)$.

Since $\phi(t)/t^\alpha$ is decreasing and $\alpha < \frac{n}{n-1}$, we obtain 
\[
\begin{split}
\lim_{t \to \infty}\frac{F(t)}{t} &=
\lim_{t \to \infty}\frac{t}{F^{-1}(t)}
= \lim_{t \to \infty}  t^{2-\frac1p} \phi\left(t^{-\frac1n}\right)^{n-1}\\
& = \lim_{t \to \infty}  t^{2-\frac1p- \frac{\alpha(n-1)}{n}} \left( \frac{\phi\left(t^{-\frac1n}\right)}{\left(t^{-\frac1n}\right)^\alpha} \right)^{n-1} \ge  \lim_{t \to \infty}  t^{1-\frac1p+1- \frac{\alpha(n-1)}{n}} \left( \frac{\phi\left(1\right)}{1^\alpha} \right)^{n-1}= \infty.
\end{split}
\]
Since the functions $F$ and $H$ are equivalent, this yields that 
\[
\lim_{t \to \infty}\frac{H(t)}{t}= \infty. 
\] 
Thus we have shown that the function $H$ satisfies the conditions (N1) -- (N3).
\end{proof}

\begin{remark}\label{rem:H-pienilla-arvoilla}
Later it is crucial to us that  
\[
H^{-1}(t) \approx \frac{t^{\frac1p-1}}{\psi \left(t^{- \frac{1}{n}} \right)^{n-1}}
 = \frac{t^{\frac1p-1}}{\phi(1)^{n-1} \left(t^{- \frac{1}{n}} \right)^{n-1}}
 = \phi(1)^{1-n} t^{\frac{n-p}{np}}
\]
for $0<t\le 1$.
Namely, then for every $\phi$ the function $H$ satisfies
$H(t) \approx t^{\frac{np}{n-p}}$  whenever $0<t\le 1$.
\end{remark}

\begin{example}
Functions 
$\phi (t)=t^{\alpha}/\log^{\beta}(e+1/t)$,
$\alpha \in[1, \frac{n}{n-1})$ and $\beta \ge 0$, satisfy the assumptions of Theorem~\ref {thm:defn_H_intro}.
\end{example}

Now, the proof for our second main theorem, Theorem \ref{thm_main} in Introduction,
follows easily:

\begin{proof}[Proof of Theorem \ref{thm_main}]
Theorem \ref{thm:pointwise-maximal}  and Theorem \ref{thm:defn_H_intro}.
\end{proof}

As a corollary we obtain 
from Theorem \ref{thm_main} and Theorem~ \ref{thm:Riesz}:

\begin{corollary}\label{cor:funktiosta-gradientiin}
Let $1\le p<n$.
Let the function  $H$ be as in Theorem~\ref{thm:defn_H_intro}. 
If $D$ is a bounded $\phi$-cigar John domain with a constant $c_J$, then
there exit a constant $C$ and  a point $x_0 \in D$ such that
the point-wise estimate
\begin{equation*}%\label{equ:funktiosta-gradientiin}
H\left( \big|u(x) - u_{B(x_0,  \dist(x_0, \partial D))}\big|\right)  \le C (M |\nabla u| (x))^p
\end{equation*}
holds 
for all $u\in L^1_p(D)$ with $\|\nabla u\|_{L^p(D)} \le 1$ and for almost every $x\in D$. Here the constant $C$ depends on $n$, $p$, $C_H$, $C_H^{\Delta_2}$, $C_\phi^{\Delta_2}$,  $c_J$, $\phi(1)$ and  $\min\Big\{\diam(D), 1\Big\}$
only.
\end{corollary}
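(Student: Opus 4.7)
The plan is to chain the two previously established results, exactly as the paragraph preceding the statement announces: Theorem \ref{thm:Riesz} bounds $|u(x) - u_{B(x_0,\dist(x_0,\partial D))}|$ pointwise by the modified Riesz potential of $|\nabla u|$, and Theorem \ref{thm_main} bounds the $H$-image of such a Riesz potential (for an $L^p$ function of norm at most one) by a constant multiple of $(M|\nabla u|(x))^p$. Composing the two estimates yields the corollary, once the multiplicative constant produced by Theorem \ref{thm:Riesz} is absorbed using the $\Delta_2$-condition on $H$.

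In more detail, I would proceed as follows. First, since $D$ is bounded and $p\ge 1$, H\"older's inequality gives $|\nabla u|\in L^1(D)$, so $u\in L^1_1(D)$ and Theorem \ref{thm:Riesz} applies, producing a John center $x_0\in D$ and a constant $C_1$ with the dependencies listed there such that
\[
\bigl|u(x) - u_{B(x_0,\dist(x_0,\partial D))}\bigr| \le C_1 \int_D \frac{|\nabla u(y)|}{\psi(|x-y|)^{n-1}}\,dy
\]
for almost every $x\in D$. Next I would apply the strictly increasing function $H$ to both sides and absorb $C_1$ via the $\Delta_2$-inequality \eqref{H_doubling}, iterated $\lceil \log_2 C_1\rceil$ times, producing a constant $C_2=C_2(C_1,C_H^{\Delta_2})$ with $H(C_1 t)\le C_2 H(t)$ for every $t\ge 0$. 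Finally, I would set $f:=|\nabla u|$ and apply Theorem \ref{thm_main}: the assumption $\|\nabla u\|_{L^p(D)}\le 1$ is exactly the normalization $\|f\|_{L^p(D)}\le 1$ required there, and the theorem produces a further constant $C_3$, depending only on $n$, $p$ and $C_H^{\Delta_2}$, controlling the $H$-value of the Riesz potential by $C_3 (M|\nabla u|(x))^p$. Setting $C:=C_2 C_3$ delivers the claim.

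No substantive obstacle is anticipated: the corollary is essentially a transcription of Theorem \ref{thm:Riesz} followed by Theorem \ref{thm_main}. The only genuine bookkeeping is the $\Delta_2$ absorption of the constant $C_1$ into $C_2$, and tracking the three constants $C_1$, $C_2$, $C_3$ to verify that the final dependence on $n$, $p$, $C_H$, $C_H^{\Delta_2}$, $C_\phi^{\Delta_2}$, $c_J$, $\phi(1)$ and $\min\{\diam(D),1\}$ matches what is claimed in the statement.
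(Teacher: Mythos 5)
Your proposal is correct and follows precisely the route the paper itself takes: the paper's entire argument for this corollary is to combine Theorem~\ref{thm:Riesz} with Theorem~\ref{thm_main}, which is exactly what you do, with the $\Delta_2$-absorption of the intermediate constant being the only (routine) bookkeeping step.
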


%%%%%%%%%%%%%%%%%%%%%%%%%%%%%%%%%%%%%%%%%%%%%%%%%%%%%%%%%%%%%%%%%%%%%%%%%%
\section{On embeddings}

Corollary
\ref{cor:funktiosta-gradientiin} is essential in the proofs of the following
Theorem \ref{thm:Sobolev-Poincare-p>1} and Theorem
\ref{thm:Sobolev-Poincare-p=1}.

\begin{theorem}[Bounded domain, $1<p<n$]\label{thm:Sobolev-Poincare-p>1}
Assume that $\phi$ satisfies the conditions $(1)$-- $(5)$, $C_\phi=1$ in the condition $(4)$, and there exists $\alpha \in[1, {n}/{(n-1)})$ such that
$t^\alpha/\phi(t)$ is increasing for $t>0$. 
Let $\psi$ be defined as in    \eqref{equ:psi}.
Let $D \subset \Rn$, $n \ge 2$,  be a bounded $\phi$-cigar John domain with a constant $c_J$. Let $1< p<n$.  
Then there exists an $N$-function $H$, that satisfies $\Delta_2$-condition and  
\[
H^{-1}(t)  \approx \frac{t^{\frac1p-1}}{\psi \left(t^{- \frac{1}{n}} \right)^{n-1}} \, \mbox{ for all } t>0\,,
\]
and there exists a constant $C<\infty$ such that the inequality
\[
\| u-  u_D \|_{L^H(D)} \le C \|\nabla u \|_{L^p (D)},
\]
holds
for every $u \in L^1_p (D)$. Here the constant $C$ depends  on $n$, $p$, $C_H^{\Delta_2}$, $C_\phi^{\Delta_2}$, $c_J$ and $\min\{\diam(D), 1\}$ only.
\end{theorem}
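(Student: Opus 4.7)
The strategy is to lift the point-wise bound of Corollary~\ref{cor:funktiosta-gradientiin} to an integrated Orlicz--Poincar\'e inequality. The existence of the $N$-function $H$ with the stated asymptotic behaviour for $H^{-1}$ is supplied directly by Theorem~\ref{thm:defn_H_intro}, whose hypotheses are exactly the hypotheses made on $\phi$ in the present statement. By positive homogeneity of both sides I would reduce to the normalisation $\|\nabla u\|_{L^p(D)}=1$. Corollary~\ref{cor:funktiosta-gradientiin} then yields a point $x_0\in D$ and a constant $C_1$, with the advertised dependencies, such that
\[
H\bigl(|u(x)-u_{B_0}|\bigr)\le C_1\bigl(M|\nabla u|(x)\bigr)^p \qquad\text{for a.e. } x\in D,
\]
where $B_0=B(x_0,\dist(x_0,\partial D))$ and $|\nabla u|$ is extended by zero outside $D$.

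\textbf{From pointwise to modular to Luxemburg.} I would integrate the displayed inequality over $D$ and invoke the $L^p$-boundedness of the Hardy--Littlewood maximal operator, which is available because $p>1$, to obtain
\[
\int_D H(|u-u_{B_0}|)\,dx \,\le\, C_1\int_{\Rn}(M|\nabla u|)^p\,dx \,\le\, C_2 \|\nabla u\|_{L^p(D)}^p=C_2.
\]
Convexity of $H$ together with $H(0)=0$ gives $H(t/\lambda)\le H(t)/\lambda$ for $\lambda\ge 1$, so the choice $\lambda=\max\{C_2,1\}$ turns the modular bound into the Luxemburg-norm estimate $\|u-u_{B_0}\|_{L^H(D)}\le C_3$.

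\textbf{Passing from $u_{B_0}$ to $u_D$.} A triangle inequality in $L^H(D)$ yields
\[
\|u-u_D\|_{L^H(D)}\le \|u-u_{B_0}\|_{L^H(D)}+|u_{B_0}-u_D|\cdot\|1\|_{L^H(D)}.
\]
Using the bound $|u_{B_0}-u_D|\le |D|^{-1}\int_D |u-u_{B_0}|\,dy$, H\"older's inequality in Orlicz spaces with the complementary $N$-function $\widetilde H$, and the standard identity $\|1\|_{L^H(D)}\,\|1\|_{L^{\widetilde H}(D)}\approx |D|$ that comes from $H^{-1}(t)\,\widetilde H^{-1}(t)\approx t$, the second term is controlled by a constant multiple of $\|u-u_{B_0}\|_{L^H(D)}$. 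Undoing the initial normalisation then gives the claimed inequality, and the dependence of the resulting constant on $n$, $p$, $C_H^{\Delta_2}$, $C_\phi^{\Delta_2}$, $c_J$ and $\min\{\diam(D),1\}$ is inherited from Corollary~\ref{cor:funktiosta-gradientiin}.

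\textbf{Main obstacle.} The only genuinely restrictive step is the use of the $L^p$-boundedness of $M$, which fails at the endpoint $p=1$; this is precisely why the case $p=1$ is set aside for the separate Theorem~\ref{thm:Sobolev-Poincare-p=1} and treated by a different argument. The remaining moves -- the modular-to-Luxemburg passage and the swap of averaging set from $B_0$ to $D$ -- are routine consequences of convexity, the $\Delta_2$-condition, and Orlicz duality, and once Theorem~\ref{thm:defn_H_intro} and Corollary~\ref{cor:funktiosta-gradientiin} are in hand the proof essentially writes itself.
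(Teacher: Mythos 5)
Your proposal is correct and follows essentially the same route as the paper: normalize $\|\nabla u\|_{L^p(D)}=1$, invoke Corollary~\ref{cor:funktiosta-gradientiin}, integrate and apply the $L^p$-boundedness of the Hardy--Littlewood maximal operator (valid since $p>1$), pass from the modular bound to the Luxemburg norm, and then swap $u_{B_0}$ for $u_D$ via the Orlicz--H\"older inequality and the relation $\|1\|_{L^H(D)}\,\|1\|_{L^{H^*}(D)}\approx|D|$ derived from $H^{-1}(t)(H^*)^{-1}(t)\approx t$. The only cosmetic difference is the order in which you undo the normalization relative to the triangle-inequality step, which is immaterial.
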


\begin{proof}
%Theorem~\ref{thm:John-John} yields that $D$ is a bounded $\psi$-John domain.
Assume that $\|\nabla u\|_{L^p(D)} \le 1$.
Corollary \ref{cor:funktiosta-gradientiin}  yields that
\[
H\left( \big|u(x) - u_{B(x_0,  \dist(x_0, \partial D))}\big|\right)  \le C (M |\nabla u| (x))^p,
\]
where the constant $C$ depends  on $n$, $p$, $C_H^{\Delta_2}$, $C_\phi^{\Delta_2}$, $c_J$, and $\min\{\diam(D), 1\}$ only.
By integrating over $D$ and using the fact that the maximal operator is bounded whenever  $1<p<n$, we obtain that
\begin{equation*}%\label{equ:H-bounded-1}
\begin{split}
\int_D H\left( \big|u(x) - u_{B(x_0,  \dist(x_0, \partial D))}\big|\right)  \, dx &\le C \int_D (M |\nabla u| (x))^p \, dx \\
&\le C\int_D |\nabla u (x)|^p \, dx \le C.
\end{split}
\end{equation*}
This yields that the inequality
\[
 \| u - u_{B(x_0,  \dist(x_0, \partial D))}\|_{L^H(D)} \le C
 \]
 holds
  for every $u\in L^1_p(D)$ with $\|\nabla u\|_{L^p(D)} \le 1$. 
By applying this inequality to the function $u/ \|\nabla u\|_{L^p(D)}$ we obtain that
\[
\| u - u_{B(x_0,  \dist(x_0, \partial D))}\|_{L^H(D)} \le C \|\nabla u\|_{L^p(D)}.
\]
We may assume w.l.o.g. that  $\|\nabla u\|_{L^p(D)} \neq 0$.
Let denote $B = B(x_0,  \dist(x_0, \partial D))$. By the triangle inequality
\[
\|u- u_D\|_{L^H(D)} \le \|u- u_{B}\|_{L^H(D)} + \|u_{B}- u_D\|_{L^H(D)}.
\]
Here,
\[
\begin{split}
\|u_{B}- u_D\|_{L^H(D)} &= |u_{B}- u_D|
\, \|1 \|_{L^H(D)}\le
\frac{\|1 \|_{L^H(D)}}{|D|} \|u-u_{B}\|_{L^1(D)}\\
&\le C \frac{\|1 \|_{L^H(D)} \|1 \|_{L^{H^*}(D)}}{|D|} \|u-u_{B}\|_{L^H(D)}
\end{split}
\]
where $H^*$ is the conjugate function of $H$ and $C$ is the constant in Hölder's inequality. 

Next we show that $\|1 \|_{L^H(D)} \|1 \|_{L^{H^*}(D)} \approx |D|$. Since the function $H$ is continuous and strictly increasing, there exists a unique $\lambda>0$ such that
\[
H\left(1/{\lambda} \right) |D| = \int_D H\left(1/{\lambda} \right) \, dx =1
\]
i.e. $\lambda = \|1 \|_{L^H(D)}$. By solving $\lambda$ we obtain
\[
\|1 \|_{L^H(D)} = \frac1{H^{-1}\left( \frac1{|D|}\right)}.
\]
Similarly, we obtain
\[
\|1 \|_{L^{H^*}(D)} = \frac1{(H^*)^{-1}\left( \frac1{|D|}\right)}.
\] 
Since
\[
t \le H^{-1}(t) (H^{*})^{-1}(t) \le 2t
\]
for all $t \ge 0$, see for example \cite[Lemma 2.6, p.~56]{DieHHR11}, we obtain that
\[
\|1 \|_{L^H(D)}\|1 \|_{L^{H^*}(D)} = \frac1{H^{-1}\left( \frac1{|D|}\right) (H^*)^{-1}\left( \frac1{|D|}\right)} \le |D|.
\]

Hence, we have shown that
\begin{equation*}%\label{equ:S-P-inequ}
\| u - u_D\|_{L^H(D)} \le C  \|\nabla u\|_{L^p(D)}
\end{equation*}
for every $u \in L^1_p (D)$.
\end{proof}

\begin{example}\label{exm:H-funktio}
Let us choose that $\phi(t) = t^s$, $s \in[1, \frac{n}{n-1})$. 
We have calculated  in Remark~\ref{rem:H-pienilla-arvoilla} that  
for every $\phi$ the function $H$ satisfies
$H(t) \approx t^{\frac{np}{n-p}}$  whenever $0<t\le 1$.
If $t>1$, then
\[
H^{-1}(t) \approx \frac{t^{\frac1p-1}}{\psi \left(t^{- \frac{1}{n}} \right)^{n-1}}
 = \frac{t^{\frac1p-1}}{\phi \left(t^{- \frac{1}{n}} \right)^{n-1}} = 
 \frac{t^{\frac1p-1}}{ \left(t^{- \frac{1}{n}} \right)^{s(n-1)}}
 = t^{\frac{n-np+sp(n-1)}{np}}
\]
and thus we have that $H(t) \approx t^{\frac{np}{n-np+sp(n-1)}}$  for $t> 1$.
\end{example}

\begin{theorem}[Bounded domain, $p=1$]\label{thm:Sobolev-Poincare-p=1}
Assume that the function $\phi$ satisfies the conditions $(1)$-- $(5)$, $C_\phi=1$ in the condition $(4)$, and there exists $\alpha \in[1, {n}/{(n-1)})$ such that
$t^\alpha/\phi(t)$ is increasing for $t>0$. 
Let $\psi$ be defined as in (\ref{equ:psi})
Let $D \subset \Rn$, $n \ge 2$, be a bounded $\phi$-cigar John domain with a constant $c_J$.   Then there exists an $N$-function $H$, that satisfies $\Delta_2$-condition and  
\[
H^{-1}(t)  \approx \frac{1}{\psi \left(t^{- \frac{1}{n}} \right)^{n-1}}\, \mbox{ for all } t>0\,,
\]
such that the inequality
\[
\| u- u_D \|_{L^H(D)} \le C \|\nabla u \|_{L^1 (D)},
\]
holds
for some constant $C$ and
for every $u \in L^1_p(D)$. Here the constant $C$ depends  on $n$, $C_H^{\Delta_2}$, $C_\phi^{\Delta_2}$ and $c_J$ only.
\end{theorem}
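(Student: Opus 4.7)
The plan is to follow the scheme of Theorem~\ref{thm:Sobolev-Poincare-p>1}, using Corollary~\ref{cor:funktiosta-gradientiin} with $p=1$ as the starting point. After normalising $\|\nabla u\|_{L^1(D)}=1$, that corollary supplies the pointwise inequality $H(|u(x)-u_{B_0}|)\le C\,M|\nabla u|(x)$ almost everywhere in $D$. The critical obstacle is that, unlike in the case $p>1$, the Hardy--Littlewood maximal operator is not bounded on $L^1(D)$, so one cannot conclude by integrating this bound directly. I propose to circumvent this in two steps: first extract a weak-type Orlicz Sobolev--Poincar\'e inequality from the weak $(1,1)$ boundedness of $M$, and then upgrade it to a strong-type inequality via a Maz'ya-style truncation argument.

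Combining the pointwise estimate with the weak $(1,1)$ bound $|\{M|\nabla u|>s\}|\le C/s$ gives the weak-type level-set estimate $|\{|u-u_{B_0}|>t\}|\le C_1/H(t)$ for every $t>0$. To refine it, for $k\in\Z$ introduce the Maz'ya truncations $v_k=\min\big((|u-u_{B_0}|-2^k)_+,\,2^k\big)$; then $|\nabla v_k|=|\nabla u|\chi_{E_k}$ with $E_k=\{2^k<|u-u_{B_0}|\le 2^{k+1}\}$, so setting $a_k=\|\nabla v_k\|_{L^1(D)}$ we have $\sum_k a_k\le 1$. For $k$ large enough that the weak-type bound forces $|B_0\cap\{|u-u_{B_0}|>2^k\}|\le |B_0|/2$, the average $(v_k)_{B_0}$ is at most $2^{k-1}$, and Corollary~\ref{cor:funktiosta-gradientiin} applied to the rescaled truncation $v_k/a_k$, together once more with weak $(1,1)$ for $M$, yields
\[
\big|\{|u-u_{B_0}|>2^{k+1}\}\big|\le\big|\{v_k\ge 2^{k-1}\}\big|\le \frac{C_2}{H\!\left(2^{k-1}/a_k\right)}.
\]

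To conclude, the layer-cake formula together with the $\Delta_2$-property of $H$ gives
\[
\int_D H(\lambda|u-u_{B_0}|)\,dx \lesssim \sum_{k\in\Z}H(\lambda 2^k)\,\big|\{|u-u_{B_0}|>2^k\}\big|,
\]
which I split at the threshold $k_0$ beyond which the refined bound beats the trivial bound $|D|$. The hypothesis that $t^\alpha/\phi(t)$ is increasing for some $\alpha\in[1,n/(n-1))$ forces the lower Boyd index of $H$ to be at least $q=n/(\alpha(n-1))>1$ (cf.\ Remark~\ref{rem:H-pienilla-arvoilla} and the analogous computation at infinity), so the main tail is bounded by a constant times $\sum_{k>k_0}(\lambda a_{k-1})^q\le (2\lambda)^q$, using $\sum a_k^q\le 1$ when $q>1$ and $\sum a_k\le 1$; the remaining head $k\le k_0$ sums geometrically because $H(t)/t$ is increasing. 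Choosing $\lambda$ small therefore yields $\int_D H(\lambda|u-u_{B_0}|)\,dx\le 1$, i.e.\ $\|u-u_{B_0}\|_{L^H(D)}\le C$ under the normalisation; rescaling then gives $\|u-u_{B_0}\|_{L^H(D)}\le C\|\nabla u\|_{L^1(D)}$ in general, and the H\"older/conjugate argument at the end of the proof of Theorem~\ref{thm:Sobolev-Poincare-p>1}, using $\|1\|_{L^H(D)}\|1\|_{L^{H^*}(D)}\lesssim|D|$, replaces $u_{B_0}$ by $u_D$. I expect the main technical hurdle to be the uniform control of the averages $(v_k)_{B_0}$ when applying the corollary to the truncations; once this is in place, the remainder is algebra and bookkeeping.
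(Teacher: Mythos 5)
Your proposal is correct and rests on the same Maz'ya truncation strategy that the paper uses: the truncations $v_k=\min\{(|u-u_{B_0}|-2^k)_+,2^k\}$, the control of the ball averages $(v_k)_{B_0}$, the reduction to a modular bound, and the final H\"older/conjugate argument replacing $u_{B_0}$ by $u_D$ are all in the paper. The route you take for the dyadic summation is, however, genuinely different. The paper applies Corollary~\ref{cor:funktiosta-gradientiin} directly to each unrescaled $v_j$ (using $\|\nabla v_j\|_{L^1}\le 1$), runs a Besicovitch covering argument in place of a black-box weak $(1,1)$ bound, and obtains for each $j$ a measure estimate whose right-hand side carries the factor $\int_D|\nabla v_j|$; it then closes the sum using \emph{only} the $\Delta_2$-condition of $H$ to bound $H(2^{j+2})/H(2^{j-1})$ and the identity $\sum_j\int_D|\nabla v_j|\le\int_D|\nabla u|\le 1$. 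You instead rescale each truncation to $v_k/a_k$ with $a_k=\|\nabla v_k\|_{L^1}$, which produces the sharper level-set bound $\lesssim 1/H(2^{k-1}/a_k)$, but then you must control the ratios $H(\lambda 2^k)/H(2^{k-1}/a_{k-1})$. For this you invoke the lower power-type bound on $H$ (lower Boyd index $q=n/\big(\alpha(n-1)\big)>1$), which is indeed a consequence of the hypothesis $\alpha<n/(n-1)$, and then use $\sum a_k^q\le\sum a_k\le 1$. Both arguments are valid under the theorem's hypotheses; yours spends a quantitatively stronger structural property of $H$ to compensate for the rescaling, while the paper's avoids the Boyd-index step altogether by not rescaling. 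In fact, if you apply the corollary to $v_k$ rather than $v_k/a_k$, the level-set bound becomes $\lesssim a_k/H(2^{k-2})$, and the sum closes with the $\Delta_2$-constant alone, recovering the paper's cleaner chain. One small bookkeeping point: the threshold on $k$ should ensure $(v_k)_{B_0}\le 2^{k-2}$ rather than $\le 2^{k-1}$, so that $v_k\ge 2^{k-1}$ gives the strictly positive gap $|v_k-(v_k)_{B_0}|\ge 2^{k-2}$ needed to feed the pointwise inequality; this is a harmless factor of $2$.
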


\begin{proof}%[Proof of Theorem~\ref{thm:Sobolev-Poincare-p=1}]
%Theorem~\ref{thm:John-John} yields that $D$ is a bounded $\psi$-John domain. Let $x_0$ be the John center.
Let us consider functions $u\in L^1_1 (D)$ such that
$\| \nabla u \|_{L^1(D)} \le 1$.  The center ball 
$B(x_0, \dist(x_0, \partial D ))$ is written as $B$.  In the proof of Theorem~\ref{thm:John-John} we had chosen $x_0$ so that $\dist(x_0, \partial D ) \ge \psi( \frac14 \diam(D))/c_J$.
We show  that there exists a constant $C<\infty$ such that the inequality
\begin{equation}\label{enough}
\int_D H(|u(x)-u_B|) \, dx \le C
\end{equation}
holds whenever $\|\nabla u\|_{L^1(D)} \le 1$. This yields  the claim as in the proof of Theorem~\ref{thm:Sobolev-Poincare-p>1}.

Since $H$ is increasing, we first estimate
\[
\int_D H(|u(x)-u_B|) \, dx \le \sum_{j \in \Z} \int_{ \{x\in D: 2^j < |u(x) - u_B| \le 2^{j+1}\}} H(2^{j+1}) \, dx.
\]
Let us define 
\[
v_j(x) = \max\bigg\{0, \min\Big\{|u(x) - u_B| - 2^j, 2^j\Big\}\bigg\}
\]
for all $x\in D$. 
\noindent
If $x \in \{x\in D :2^j < |u(x) - u_B|\le 2^{j+1}\}$, then $v_{j-1}(x) \ge 2^{j-1}$. 
We obtain
\begin{equation}\label{equ:main-2}
\int_D H(|u(x)- u_B|) \, dx \le \sum_{j \in \Z} \int_{\{x\in D :v_j(x) \ge 2^{j}\}} H(2^{j+2}) \, dx.
\end{equation}
By the triangle inequality %and Theorem~\ref{thm:pointwise-John} 
we have
\[
\begin{split}
v_j(x) &= |v_j(x) - (v_j)_B + (v_j)_B| \le |v_j(x) - (v_j)_B| + |(v_j)_B|.
%\\ 
%&\le C \int_{D}  \frac{|\nabla v_j(y)|}{\varphi \big( |x-y|\big)^{n-1}} \, dy +  |(v_j)_B|
\end{split}
\]
%for almost every $x\in D$.
By the $(1,1)$-Poincar\'e inequality in a ball $B$, \cite[Section 7.8]{Gilbarg-Trudinger}, there exists a constant $C(n)$ such that
\begin{equation*}%\label{equ:p=1_vakio_ongelma}
\begin{split}
|(v_j)_B|&= (v_j)_B = \vint_B v_j(x) \, dx \le \vint_B |u(x) - u_B| \, dx\\  
&\le C(n) |B|^{\frac1n} \vint_B |\nabla u(x)| \, dx \le C(n) |B|^{\frac1n-1}.
\end{split}
\end{equation*}
%Thus, by the definition of $B$ the number $|(v_j)_B|$ is bounded by a constant depending  on $n$ and the distance between the John center and the boundary
%of $D$ only. Moreover the constant tends to zero as the radius of $B$ tends to infinity.

%We write
%\[
%I_{\varphi}(\nabla v_j)(x) = \int_{D}  \frac{|\nabla v_j(y)|}{\varphi \big( |x-y|\big)^{n-1}} \, dy.
%\]
We continue to estimate the right hand side of inequality \eqref{equ:main-2}
\begin{equation}\label{equ:main-3}
\begin{split}
&\int_D H(|u(x)- u_B|) \, dx \le \sum_{j \in \Z} \int_{\{x\in D: |v_j(x) - (v_j)_B|+ C|B|^{-1} \ge 2^{j}\}} H(2^{j+2}) \, dx\\
&\quad \le \sum_{j \in \Z} \int_{\{x\in D : |v_j(x) - (v_j)_B| \ge 2^{j-1}\}} H(2^{j+2}) \, dx +\sum_{2^{j-1} \le C(n) |B|^{\frac1n-1} } \int_{D} H(2^{j+2}) \, dx\\
&\quad \le \sum_{j \in \Z} \int_{\{x\in D : |v_j(x) - (v_j)_B| \ge 2^{j-1}\}} H(2^{j+2}) \, dx +\sum_{j=-\infty}^{j_0} \int_{D} H(2^{j+2}) \, dx,
\end{split}
\end{equation}
where $j_0 =  \lceil\log(C(n) |B|^{\frac1n-1})\rceil$.

Assume first that $\diam (D)$ is so large that $j_0 \le -2$.
When $t<1$, then $\psi(t^{-1/n}) = \phi(1) t^{-1/n}$  by \eqref{equ:psi} and thus
\[
H^{-1}(t) = \frac{1}{\psi(t^{-1/n})^{n-1}} = \phi(1)^{1-n} t^{\frac{n-1}{n}}.
\] 
Thus for $t<1$ we obtain that  $H(t) \approx t^{\frac{n}{n-1}}$.
 This yields that 
\begin{equation}\label{equ:main-4a}
\begin{split}
\sum_{j=-\infty}^{j_0} \int_{D} H(2^{j+2}) \, dx
&\approx |D| \sum_{j=-\infty}^{\lceil\log(C |B|^{\frac1n-1})\rceil} 2^{\frac{n(j+2)}{n-1}} \le C|D| 2^{\frac{n}{n-1} \cdot \lceil\log(C |B|^{\frac1n-1})\rceil}\\
& \le C|D| |B|^{\frac{n}{n-1}( \frac1n -1)} = C |D| |B|^{-1}\\
&\le C \frac{\diam(D)^n}{(\psi( \frac14 \diam(D))/c_J)^n}.
\end{split}
\end{equation}
This constant does not blow up when $\diam(D) \to \infty$:
\[
\frac{\diam(D)^n}{(\psi( \frac14 \diam(D))/c_J)^n} \to \frac{4^n c_J^n}{\phi(1)^n} \quad \text{as} \quad \diam(D) \to \infty.
\]
Assume then that $\diam(D)$ is small. 
This yields that for every $j_0 \in  \Z$ the sum $\sum_{j=-2}^{j_0}H(2^{j+2})$ is finite and depends on $j_0$.
We obtain 
\begin{equation}\label{equ:main-4b}
\sum_{j=-\infty}^{j_0} \int_{D} H(2^{j+2}) \, dx
\le \sum_{j=-\infty}^{-2} \int_{D} H(2^{j+2}) + \sum_{j=-2}^{j_0}H(2^{j+2})< \infty.
\end{equation}

Then, we will find an upper bound for the sum
\begin{equation*}
\sum_{j \in \Z} \int_{\{x\in D : |v_j(x) - (v_j)_B| \ge 2^{j-1}\}} H(2^{j+2}) \, dx\,. 
\end{equation*}
Since $\|\nabla v_j\|_{L^1(D)} \le \|\nabla u\|_{L^1(D)} \le 1$, Corollary \ref{cor:funktiosta-gradientiin} yields that
\[
\begin{split}
\sum_{j \in \Z} \int_{\{x\in D: |v_j(x) - (v_j)_B| \ge 2^{j-1}\}} H(2^{j+2}) \, dx 
&= \sum_{j \in \Z} \int_{\{x\in D: H(|v_j(x) - (v_j)_B|) \ge H(2^{j-1})\}} H(2^{j+2}) \, dx \\ 
&\le  \sum_{j \in \Z} \int_{\{x\in D: C  M|\nabla v_j|(x) \ge H(2^{j-1})\}} H(2^{j+2}) \, dx.
\end{split}
\]
We choose for every $x \in \{x\in D: C  M|\nabla v_j|(x) \ge H(2^{j-2})\}$ a ball $B(x, r_x)$,
centered at $x$ and with radius $r_x$ depending on $x$, such that
\[
C \vint_{B(x,r_x)} |\nabla v_j (y)| \, dy \ge \frac12 H(2^{j-1})
\]
with the understanding that $|\nabla v_j|$ is zero outside $D$.
By the Besicovitch covering theorem
(or the $5$-covering theorem) we obtain a subcovering $\{B_k\}_{k=1}^{\infty}$ so that we may estimate by the $\Delta_2$-condition of $H$
\[
\begin{split}
&\sum_{j \in \Z} \int_{\{x\in D: |v_j(x) - (v_j)_B| \ge 2^{j-1}\}} H(2^{j+2}) \, dx
\le \sum_{j \in \Z} \sum_{k=1}^\infty  \int_{B_k}H(2^{j+2}) \, dx\\
 & \quad\le \sum_{j \in \Z} \sum_{k=1}^\infty  |B_k| H(2^{j+2}) \le \sum_{j \in \Z} \sum_{k=1}^\infty C |B_k|  \frac{H(2^{j+2})}{H(2^{j-1})} \vint_{B_k} |\nabla v_j (y)| \, dy\\
&\quad \le C \sum_{j \in \Z} \int_{D} |\nabla v_j (y)| \, dy.
%&\quad\le C \int_{D} |\nabla u (y)| \, dy \le C.
\end{split}
\]
Let $E_j= \{x\in D :2^j < |u(x) - u_B|\le 2^{j+1}\}$.
Since $|\nabla v_j|$ is zero  almost everywhere in  $D\setminus E_j$ and
$|\nabla u (x)| = \sum_j |\nabla v_j (x)| \chi_{E_j}(x)$ for almost every $x \in D$, we obtain
\begin{equation}\label{equ:main-5}
\sum_{j \in \Z} \int_{\{x\in D: |v_j(x) - (v_j)_B| \ge 2^{j-1}\}} H(2^{j+2}) \, dx
\le C \int_{D} |\nabla u (y)| \, dy \le C.
\end{equation}
Estimates \eqref{equ:main-3}, \eqref{equ:main-4a}, \eqref{equ:main-4b}
and \eqref{equ:main-5} imply inequality
\eqref{enough}.
\end{proof}

\begin{remark}
Corollary 
\ref{corollary_bdd}
in Introduction follows from  Theorem \ref{thm:Sobolev-Poincare-p>1}
and Theorem  \ref{thm:Sobolev-Poincare-p=1}.
\end{remark}

\begin{remark}
In Theorem~\ref{thm:Sobolev-Poincare-p=1}  the $N$-function  $H$ is the best possible in a sense that it cannot be replaced by  any  $N$-function $K$ that satisfies the $\Delta_2$-condition and
\[
\lim_{t \to \infty} \frac{K(t)}{H(t)} = \infty.
\] 
In \cite[Theorem 7.2]{HH-S1} we have shown that the corresponding embedding  in Theorem~\ref{thm:Sobolev-Poincare-p=1}
does not hold if 
\[
\lim_{t \to 0^+} t^n K \left( \frac{1}{\phi(t)^{n-1}}  \right)=\infty.
\]
This is valid for this function $K$. 
By the definitions of $H^{-1}$ and $\psi$ we obtain that
\[
%\begin{split}
\lim_{t \to 0^+} t^n K \left( \frac{1}{\phi(t)^{n-1}}  \right)
= \lim_{s \to \infty} \frac1s K \left( \frac{1}{\phi\left(s^{-\frac1n}\right)^{n-1}}  \right) 
%&= \lim_{s \to \infty} \frac1s L \left( H^{-1}(s)  \right)
= \lim_{s \to \infty} \frac{K \left( H^{-1}(s)  \right)}{H \left( H^{-1}(s)  \right)}= \infty,
%\end{split}
\]
and thus there does not exists a constant $c$ such that
\[
\| u- u_D \|_{L^K(D)} \le c \|\nabla u \|_{L^1 (D)},
\]
for every $u \in L^1_p(D)$.
\end{remark}

\begin{theorem}[Unbounded domain, $1 \le p<n$]\label{thm:Sobolev-Poincare-unbounded}
Assume that the function $\phi$ satisfies the conditions $(1)$-- $(5)$, $C_\phi=1$ in the condition $(4)$, and there exists $\alpha \in[1, {n}/{(n-1)})$ such that
$t^\alpha/\phi(t)$ is increasing for $t>0$. 
Let the function $\psi$ be defined as in \eqref{equ:psi}.
Let $D$ in $\Rn$, $n \ge 2$, be an unbounded domain that satisfies the following conditions:
\begin{itemize}
\item[(a)] $D  = \cup_{i=1}^\infty D_i$, where $|D_1|>0$;
\item[(b)] $\overline D_i \subset D_{i+1}$ for each $i$;
\item[(c)] each $D_i$ is a bounded $\phi$-cigar John domain with a constant $c_J$.
\end{itemize}
Let $1\le p<n$.  Then, there exists an $N$-function $H$, that satisfies $\Delta_2$-condition and  
\[
H^{-1}(t)  \approx \frac{t^{\frac1p-1}}{\psi \left(t^{- \frac{1}{n}} \right)^{n-1}} \mbox{ for all } t>0\,,
\]
and there exits a constant $C$ such that the inequality
\[
\inf_{b \in \R} \| u- b \|_{L^H(D)} \le C \|\nabla u \|_{L^p (D)},
\]
holds
for every $u  \in L^1_p(D)$. Here the constant $C$ depends  on $n$, $p$, $C_H^{\Delta_2}$, $C_\phi^{\Delta_2}$ and $c_J$ only.
\end{theorem}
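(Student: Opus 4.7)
The plan is to exhaust $D$ by the bounded $\phi$-cigar John domains $D_i$ and transfer the Orlicz--Poincar\'e inequality from each piece to the whole domain, keeping constants uniform in $i$. First I would choose $i_0$ so large that $\diam(D_i)\ge 1$ for every $i\ge i_0$; this is possible since $D$ is unbounded and $\overline{D_i}\subset D_{i+1}$. Then, for each such $i$, Theorem~\ref{thm:Sobolev-Poincare-p>1} (if $1<p<n$) or Theorem~\ref{thm:Sobolev-Poincare-p=1} (if $p=1$) furnishes a constant $b_i:=u_{D_i}\in\R$ and a constant $C$ that, since $\min\{\diam(D_i),1\}=1$, depends only on $n,p,C_H^{\Delta_2},C_\phi^{\Delta_2},c_J$, so that
\begin{equation*}
\|u-b_i\|_{L^H(D_i)} \le C\,\|\nabla u\|_{L^p(D_i)} \le C\,\|\nabla u\|_{L^p(D)}.
\end{equation*}

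Next I would show that the sequence $(b_i)_{i\ge i_0}$ is bounded in $\R$. Since $D_{i_0}\subset D_i$, the triangle inequality in $L^H(D_{i_0})$ gives
\begin{equation*}
|b_i-b_{i_0}|\,\|1\|_{L^H(D_{i_0})} \le \|u-b_i\|_{L^H(D_{i_0})}+\|u-b_{i_0}\|_{L^H(D_{i_0})} \le 2C\,\|\nabla u\|_{L^p(D)},
\end{equation*}
and, as already computed in the proof of Theorem~\ref{thm:Sobolev-Poincare-p>1}, $\|1\|_{L^H(D_{i_0})}=1/H^{-1}(1/|D_{i_0}|)$ is a fixed positive number. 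Hence $(b_i)$ admits a subsequence $b_{i_k}\to b$ for some $b\in\R$.

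To pass to the limit I would use Fatou's lemma. By definition of the Luxemburg norm,
\begin{equation*}
\int_{D_{i_k}} H\!\left(\frac{|u(x)-b_{i_k}|}{C\,\|\nabla u\|_{L^p(D)}}\right)dx \le 1 \qquad \text{for every } k.
\end{equation*}
Continuity of $H$ and $b_{i_k}\to b$ give pointwise a.e.\ convergence of the integrands, and $\chi_{D_{i_k}}\to\chi_D$ a.e.\ by (a) and (b). Applying Fatou's lemma yields the same estimate with $D$ and $b$ in place of $D_{i_k}$ and $b_{i_k}$, which gives $\inf_{b'\in\R}\|u-b'\|_{L^H(D)}\le C\,\|\nabla u\|_{L^p(D)}$, as required.

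The main obstacle I foresee is verifying that the constant produced by Theorems~\ref{thm:Sobolev-Poincare-p>1} and \ref{thm:Sobolev-Poincare-p=1} truly does not depend on the geometry of $D_i$ beyond $c_J$ once $\diam(D_i)\ge 1$. One has to trace through Corollary~\ref{cor:funktiosta-gradientiin}, Theorem~\ref{thm:Riesz} and the covering Lemma~\ref{lem:covering} to check that no hidden dependence on the John center $x_0\in D_i$ or on $\diam(D_i)$ re-enters through the constant $K$ of the accompanying remark; the estimate there, $\tfrac{\alpha}{\dist(x_0,\partial D_i)}\to 32c_J^5/\phi(1)^4$ as $\diam(D_i)\to\infty$, is precisely what makes this uniform control possible.
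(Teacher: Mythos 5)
Your proposal is correct and follows essentially the same route as the paper's proof: apply the bounded-domain embeddings (Theorems \ref{thm:Sobolev-Poincare-p>1} and \ref{thm:Sobolev-Poincare-p=1}) on the exhaustion $D_i$, observe that the constants stabilize as $\diam(D_i)\to\infty$, show the constants $b_i=u_{D_i}$ form a bounded real sequence, extract a convergent subsequence, and pass to the limit via Fatou's lemma. The only cosmetic difference is that you bound $(b_i)$ by a single triangle inequality in $L^H(D_{i_0})$, whereas the paper estimates $|u_{D_i}|$ through an $L^1(D_1)$ average and H\"older's inequality, and the paper carries out the limit argument in modular form after normalizing $\|\nabla u\|_{L^p(D)}\le 1$ rather than directly with Luxemburg norms; both variants are equivalent.
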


The proof follows the proof of \cite[Theorem~4.1]{H-S92}.

\begin{proof}
By Theorems \ref{thm:Sobolev-Poincare-p>1} and \ref{thm:Sobolev-Poincare-p=1}
there exits a constant $C$ such that the inequality
\begin{equation}\label{equ:S-P-inequ}
\| u- u_{D_i} \|_{L^H(D_i)} \le C \|\nabla u \|_{L^p (D_i)}
\end{equation}
holds for each $D_i$ and all $u \in L^1_p(D)$.
The constant $C$ does not blow up when the diameter 
of $D_i$ tends to infinity. In the case $1<p<n$ this is clear. In the case $p=1$, we refer to the discussion after 
\eqref{equ:main-4a}
in the proof of Theorem~\ref{thm:Sobolev-Poincare-p=1}. The constant depends on $D_1$ but this does not cause a problem.

When $\| \nabla u\|_{L^p(D)} \le 1$  inequality \eqref{equ:S-P-inequ} yields that 
there exists a constant $C<\infty$
such that the inequality
\[
\int_{D_i} H(|u(x) - u_{D_i}|) \, dx  \le C,
\]
holds; here the constant $C$ is independent of $i$. 

Let us write
\[
u_i =  \vint_{D_i} u(x) \, dx =  \frac1{|D_i|}\int_{D_i} u(x) \, dx.
\]
The triangle inequality yields that
\[
|u_i| \le   \vint_{D_1} |u(x) - u_i| \, dx+  \vint_{D_1} |u(x)| \, dx.
\]
Since $D_i$ satisfies inequality \eqref{equ:S-P-inequ}, we have $u \in L^H(D_1)\subset L^1(D_1)$ and thus the second term is finite.
Again, by inequality \eqref{equ:S-P-inequ} we obtain that 
\[
\begin{split}
\vint_{D_1} |u(x) - u_i| \, dx & \le
\frac{C \|1\|_{L^{H^*}(D_1)}}{|D_1|}   \| u - u_{D_i}\|_{L^H(D_1)}\\
&\le \frac{C \|1\|_{L^{H^*}(D_1)}}{|D_1|}   \| u - u_{D_i}\|_{L^H(D_i)} \le 
  \frac{C \|1\|_{L^{H^*}(D_1)}}{|D_1|} \| \nabla u\|_{L^p(D_i)}\\
&\le  \frac{C \|1\|_{L^{H^*}(D_1)}}{|D_1|} \| \nabla u\|_{L^p(D)}< \infty.
\end{split}
\]
Thus the real number sequence $(u_i)$ is bounded and hence there exists a convergent subsequence  $(u_{i_j})$ and $b \in \R$ such that $u_{i_j} \to b$. 

Since $H$ is continuous,
\[
\lim_{j \to \infty} \chi_{D_{i_j}} H(|u(x) - u_{i_j}|)
= \chi_{D} H(|u(x) - b|).
\]
Fatou's lemma and the modular form of \eqref{equ:S-P-inequ} yield that
\[
\begin{split}
\int_D H(|u(x) -b|) \, dx & \le \liminf_{j \to \infty} \int_D \chi_{D_{i_j}} H(|u(x) - u_{i_j}|)\, dx\\
&= \liminf_{j \to \infty} \int_{D_{i_j}} H(|u(x) - u_{i_j}|)
\le \liminf_{j \to \infty} C= C
\end{split}
\]
for every  $u \in L^1_{\textup{loc}}(D)$ with $\|\nabla u\|_{L^p(D)}\le 1$. This yields that 
there exists a constant $C$ such that the inequality
\[ 
\| u - b\|_{L^H(D)} \le C
\]
holds
for every $u\in L^1_p(D)$ with $\|\nabla u\|_{L^p(D)} \le 1$. The claim follows by applying this inequality to the function $u/ \|\nabla u\|_{L^p(D)}$.
\end{proof}

\begin{example}
Let the function $\phi$ be defined as in Theorem~\ref{thm:Sobolev-Poincare-unbounded}.
The following unbounded domains satisfy the assumptions of Theorem~\ref{thm:Sobolev-Poincare-unbounded}.

(a) $\left\{(x', x_n)\in\Rn: x_n \ge 0 \quad \text{and} \quad |x'|< \psi(x_n) \right\}$.

(b) $\Rk \setminus \left( \{(x,\phi(x)) \in \Rk: 0\le x \le 1\} \cup \{(x,-\phi(x)) \in \Rk: 0\le x \le 1\} \right)$.

\end{example}

%%%%%%%%%%%%%%%%%%%%%%%%%%%%%%%%%%%%%%%%%%%%%%%%%%%%%%%%%%%%%%%%%%%%%%%%%%%%%%%
\section{On Poincar\'e inequalities}

As a special case we recover results for Poincar\'e domains.
We recall that a bounded domain $D$ is called a $(q,p)$-Poincar\'e domain, where $q,p \in[1, \infty)$, if
there is a constant $C<\infty$ such that the inequality
\begin{equation}\label{Poincare}
\| u - u_D \|_{L^q(D)} \le C\| \nabla u\|_{L^p(D)}
\end{equation}
holds for all $u \in W^{1,p}(D)$. 
Inequality \eqref{Poincare} is the $(q,p)$-Poincar\'e inequality.
We note that for a bounded domain  $D$ inequality 
\eqref{Poincare}
holds if and only the inequality 
\[
\inf_{b\in \R} \| u - b \|_{L^q(D)} \le C_1 \| \nabla u\|_{L^p(D)}
\]
holds, the constants
$C$ and $C_1$ depend on each other and $|D|$ only.
Let us recall  results for bounded $\phi$-John domains in the case $\phi(t) = t^s$,  for a fixed $s\ge 1$. A bounded $t^s$-John domain is usually  called $s$-John domain.  A bounded $s$-John domain is a $(p,p)$-Poincar\'e domain  whenever $s \in \left[1, \frac{n+ p-1}{n-1}\right)$,
\cite[Theorem 10]{SmiS90}.
So, a bounded $s$-John domain is a $(p,p)$-Poincar\'e domain for all $p \ge 1$ if $s \in \left[1, \frac{n}{n-1}\right)$.
A bounded $s$-John domain is a $(1,p)$-Poincar\'e domain  if $s\in \left(1, \frac{p(n-\lambda +1)+ \lambda -1}{n-1} \right)$, where $\lambda\in[n-1,n]$ is the Minkowski dimension of the boundary of the domain, \cite[Theorem 1.3]{ HH-SV}.
A bounded $s$-John domain is a $\left(\frac{np}{s(n-1) - p +1},p \right)$-Poincar\'e domain for every $1 \le p < s(n-1) +1$ if $s \in \left[1, \frac{n}{n-1} \right]$;\cite[Corollaries 5 and 6]{HajK98}\,,
\cite[Theorem 2.3]{KilM00}.
The exponent $\frac{np}{s(n-1) - p +1}$ is the best possible in the class of bounded  $s$-John domains, we refer to  \cite[p. 442]{HajK98}.
Our Theorems~\ref{thm:Sobolev-Poincare-p>1} and \ref{thm:Sobolev-Poincare-p=1} 
with $\phi (t)=t^s$ give that a bounded $s$-John domain is a $\left(\frac{np}{n-np+sp(n-1)}, p \right)$-Poincar\'e domain if $1 \le p <n$ and $s \in \left[1, \frac{n}{n-1} \right)$.  Thus our result is optimal in the case $p=1$. On the other hand, our method does not cover the case $s=\frac{n}{n-1}$.
Note that our proof totally differs from the previous proofs 
in \cite[Theorem 10]{SmiS90}, 
\cite[Corollaries 5 and 6]{HajK98}, and   \cite[Theorem 2.3]{KilM00}.

%%%%%%%%%%%%%%%%%%%%%%%%%%%%%%%%%%%%%%%%%%%%%%%%%%%%%%

%%%%%%%%%%%%%%%%%%%%%%%%%%%%%%%%%%%%%%%%%%%%%%%%%%%%%%%%%%%%%%%%%%%%

%%%%%%%%%%%%%%%%%%%%%%%%%%%%%%%%%%%%%%%%%%%%%%%%%%%%%%%%%%

\section{Lebesgue space cannot be  a target space}

In this section we give an example which shows that for certain unbounded $\phi$-cigar John domains the target space cannot be a Lebesgue space. 
The idea is that at near the infinity the target space should be $L^{np/(n-p)}$ but local structure of the domain may not allow so good integrability. 
We assume a priori that the function $\phi$ has the properties (1)--(5). 
Later on we give extra conditions to the function $\phi$. 
%In Remark~\ref{rem:H-pienilla-arvoilla} we noticed that $H(t) \approx t^{\frac{np}{n-p}}$  for $0<t\le 1$ and if $\phi(t) = t^s$, then
%$H(t) \approx t^{\frac{np}{n-np+sp(n-1)}}$  for $t> 1$

We construct a mushrooms-type domain. Let $(r_m)$ be a decreasing sequence of positive real numbers converging to zero. 
Let $Q_{m}$, $m=1,2,\dots$, be a closed cube in $\Rn$ with side length $2r_{m}$. Let $P_{m}$,
$m=1,2,\dots$,
be a closed rectangle in $\Rn$ which has side length $r_{m}$ for one side and $2\varphi (r_{m})$ for the remaining $n-1$ sides.  Let $Q$ be the first quarter of the space i.e. all coordinates of the points in $Q$ are positive. 
We attach $Q_{m}$ and $P_{m}$ together creating 'mushrooms' which we then attach, as pairwise disjoint sets, to the side $\{(0, x_2, \ldots, x_n): x_2, \ldots, x_n >0\}$ of $Q$ so that the distance from the mushroom to the origin is at least $1$ and at most $4$, see Figure~\ref{fig:domain}.  
We assume that a priori the function $\phi$ has the properties (1)--(5), but 
we have to assume here also that $\varphi(r_m) \le r_m$.
We need copies of the mushrooms.  By an isometric mapping we transform these mushrooms onto the side $\{(x_1, 0, \ldots, x_n): x_1, x_3, \ldots, x_n >0\}$ of $Q$  and
denote them by $Q_m^*$ and $P_m^*$. So again the distance from the mushroom to the origin is at least $1$ and at most $4$.
We define
\begin{eqnarray}\label{eq:mushroon-domain}
G=\textrm{int}\left(Q \cup\bigcup_{m=1}^{\infty}\Big(Q_{m}\cup P_{m}\cup Q^{*}_{m}\cup P^{*}_{m}\Big)\right).
\end{eqnarray}
See Figure~\ref{fig:domain}. We omit  a short calculation which shows that $G$ is a $\phi$-cigar John domain. 

\begin{figure}[ht!]
\includegraphics[width=11 cm]{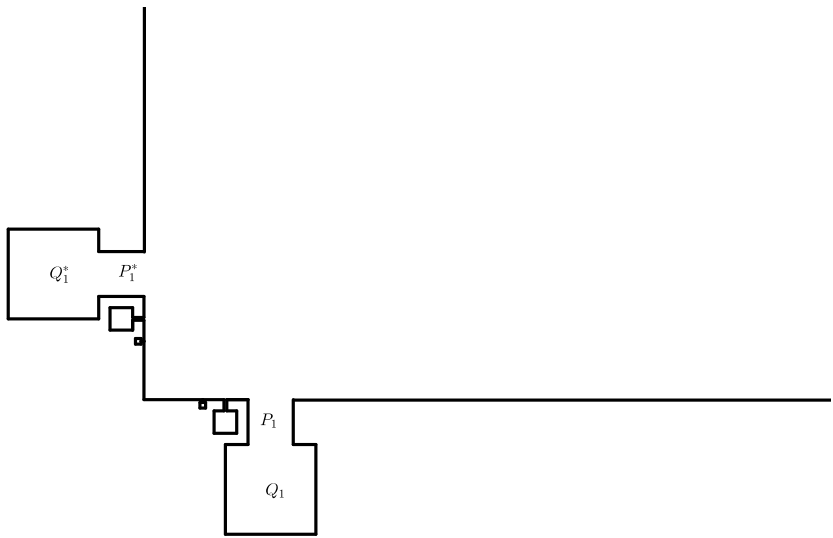}
\caption{Unbounded $\phi$-cigar John domain.}\label{fig:domain}
\end{figure}

Let us define a sequence of piecewise linear continuous functions $(u_k)_{k=1}^{\infty}$ by setting
\begin{equation*}
u_{k}(x):=
\begin{cases}
F(r_k)& \textrm{in } Q_{k}, \\
-F(r_k) & \textrm{in } Q^{*}_{k},\\
0 & \textrm{in} Q_0,
\end{cases}
\end{equation*}
where the function $F$ will be  given in \eqref{F}.
Then the integral average of $u_{k}$ over $G$ is zero
for each $k$. 

The gradient of $u_k$ differs from zero in  $P_m \cup P_m^*$ only and 
\[
|\nabla u_k(x)| = \frac{F(r_m)}{r_m}, \textrm{ when }  x\in P_m \cup P_m^* \,.
\] 
Note that
\begin{equation*}
\int_G |\nabla u_k(x)|^p \, dx = 2\int_{P_m}
\biggl(\frac{F(r_m)}{r_m}\biggr)^p=
2r_m\left(\varphi(r_m)\right)^{n-1}\frac{F(r_m)^p}{r_m^p}\,.
\end{equation*}
We require that
\begin{equation*}
\int_G |\nabla u_k(x)|^p \, dx =1\,.
\end{equation*}
Hence, we define
\begin{equation}\label{F}
F(r_m)=\biggl(\frac{r_m^{p-1}}{2\varphi (r_m)^{n-1}}\biggr)^{1/p}\,.
\end{equation}
Let $H$ be an  $N$-function. Then,
\begin{equation*}
\begin{split}
\inf_{b\in\R} \int_G H(|u_k(x) - b | )\, dx &\ge  \inf_{b\in\R} \biggl( |Q_m| \cdot |H(F(r_m) -b)| +  |Q_m^*|\cdot |H(-F(r_m) -b)| \biggr)\\
&\ge r_m^nH(F(r_m))\,.
\end{split}
\end{equation*}
Hence,   we have 
\begin{equation*}%\label{H}
\begin{split}
r^n_mH(F(r_m))=
r_m^nH\biggl(\biggl(\frac{r_m^{p-1}}{ 2 \varphi (r_m^{n-1})}\biggr)^{1/p}\biggr)
&\ge r_m^nH\biggl(\frac12 \biggl(\frac{r_m^{p-1}}{  \varphi (r_m^{n-1})}\biggr)^{1/p}\biggr).
\end{split}
\end{equation*}
Thus, there does not exist a
positive constant $C$ such that the inequality $\inf_b \|u-b \|_{L^H(G)} \le C \| \nabla u\|_{L^p(G)}$
could hold for all $u$ from the appropriate space if
\[
\lim_{t \to 0^+} t^nH\biggl(\frac12 \biggl(\frac{t^{p-1}}{ \varphi (t)^{n-1}}\biggr)^{1/p}\biggr) = \infty.
\]
Assume that $\lim_{t \to 0^+} t/\phi(t) = \infty$. If $H(t)= t^q$, then we obtain that the inequality does not hold if 
\begin{equation}\label{equ:ehto1}
q \ge  \frac{np}{n-p}.
\end{equation}

Assume then that we have a sequence $(s_j)$  of positive numbers going to infinity. For each $s_j$ we may choose points $x(j)$
and $y(j)$ such that the balls $B (x(j), s_j)$ and $B(y(j), s_j)$ are subsets of the first quadrant and 
$B (x(j), 3s_j) \cap B(y(j), 3s_j)= \emptyset$. 
We define a sequence of piecewise linear continuous functions $(v_j)_{j=1}^{\infty}$ by setting
\begin{equation*}
v_{j}(x):=
\begin{cases}
s_j^{- \frac{n-p}{p}}& \textrm{in } B (x_j^1, s_j), \\
-s_j^{- \frac{n-p}{p}} & \textrm{in } B (x_j^2, s_j),\\
0 & \textrm{in } G\setminus \left(B (x_j^1, 2s_j) \cup B (x_j^2, 2s_j) \right).
\end{cases}
\end{equation*}
Now we have
\[
\int_G |\nabla u_j|^p \, dx \le C s_j^n \left | \frac{s_j^{- \frac{n-p}{p}}}{s_j}\right|^p \le C
\]
for some constant $C$.
On the other hand, for any $b\in\R$
\[
\begin{split}
\int_{G} H(|u_j(x)-b|) \, dx &\ge C s_j^n H(|s_j^{- \frac{n-p}{p}} -b|) + C s_j^n H(|-s_j^{- \frac{n-p}{p}} -b|)\\ 
&\ge C s_j^n H(|s_j^{- \frac{n-p}{p}}|).
\end{split}
\]
Thus, there does not exist a
positive constant $C_1$ such that the inequality $\inf_b \|u-b \|_{L^H(G)} \le C_1 \| \nabla u\|_{L^p(G)}$
could hold for all $u$ from the appropriate space if
\[
\lim_{s \to \infty} s^n H(s^{- \frac{n-p}{p}}) = \lim_{s \to \infty} s^{\frac{pn}{n-p} }H\left(\frac1s \right)  =  \infty.
\]

By choosing  $H(t)= t^q$, we obtain that the inequality does not hold if 
\begin{equation}\label{equ:ehto2}
q < \frac{np}{n-p}.
\end{equation}

If $\lim_{t \to 0^+} t/\phi(t) = \infty$ and 
if there were an embedding with the Lebesgue space $L^q$ as a target space, then by  \eqref{equ:ehto1} we 
would have  $q  < \frac{np}{n-p}$ and by  \eqref{equ:ehto2} we would have  $q  \ge \frac{np}{n-p}$. Thus the target space cannot be a Lebesgue space.
The target space can be $L^q$ only if $\lim_{t \to 0^+} t/\phi(t) < \infty$. 
And in this case  $q= \frac{np}{n-p}$. Note that  the limit $\lim_{t \to 0^+} t/\phi(t)$ exists since $\phi$ is increasing and $\phi \ge 0$.  If  $\lim_{t \to 0^+} t/\phi(t)=m >0$, then there exists $t_0 >0$ such that $ \frac12 m \phi(t) \le t \le 2m \phi(t)$.

Thus, we have proved the following theorems.

\begin{theorem}
Let $\phi$ satisfy (1)--(5),  and assume that  $\lim_{t \to 0^+} t/\phi(t) =\infty$. Let $G$ be the unbounded $\phi$-cigar John domain constructed in \eqref{eq:mushroon-domain}. Let $1\le p < n$. Then there do not exist  numbers $q\in \R$  and $C\in\R$ such that the inequality
\[
\inf_{b\in \R} \| u- b \|_{L^q (G)} \le C \| \nabla u\|_{L^p(G)} 
\]
could hold for all $u \in L^1_p(G)$.
\end{theorem}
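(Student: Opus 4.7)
The plan is to argue by contradiction and combine the two test-function constructions already set up in the discussion preceding the theorem. Suppose, for some $q\in \R$ and some finite constant $C$, the inequality $\inf_{b\in\R}\|u-b\|_{L^q(G)}\le C\|\nabla u\|_{L^p(G)}$ held for every $u\in L^1_p(G)$. I would split into two ranges of $q$ and derive a contradiction from each family of test functions.

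For the range $q\ge np/(n-p)$, I would apply the inequality to the mushroom functions $u_k$ already constructed. Since $\|\nabla u_k\|_{L^p(G)}=1$, and by symmetry the quantity $\inf_{b\in\R}\int_G|u_k-b|^q\,dx$ is bounded below by a constant multiple of $r_k^n F(r_k)^q$ with $F(r_k)=(r_k^{p-1}/(2\varphi(r_k)^{n-1}))^{1/p}$, the hypothesis would give $r_k^{n/q} F(r_k)\lesssim 1$ uniformly in $k$. Rewriting the left-hand side as $r_k^{n/q - (n-p)/p}\,(r_k/\varphi(r_k))^{(n-1)/p}$ and letting $r_k\to 0^+$, the $r_k$-exponent is non-positive in this range while the factor $(r_k/\varphi(r_k))^{(n-1)/p}$ diverges by the hypothesis $\lim_{t\to 0^+}t/\varphi(t)=\infty$, contradicting boundedness. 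This is exactly condition \eqref{equ:ehto1} in the preceding discussion.

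For the complementary range $q<np/(n-p)$, I would apply the inequality to the bump functions $v_j$ supported near two widely separated balls of radius $s_j\to\infty$ inside the first quadrant component of $G$. Their gradient $L^p$-norm stays uniformly bounded, while the symmetric two-peak structure gives $\inf_{b\in\R}\int_G|v_j-b|^q\,dx\gtrsim s_j^{n-q(n-p)/p}$. When $q<np/(n-p)$ this exponent is strictly positive, so the right-hand side diverges as $s_j\to\infty$, again contradicting the hypothesis. This is condition \eqref{equ:ehto2}. Together, the two cases exhaust all $q$ in the natural Lebesgue range $q\ge 1$, which yields the theorem.

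The main organizational obstacle is to justify the two lower bounds on $\inf_{b\in\R}$. For the mushrooms, symmetry together with convexity of $b\mapsto|F(r_k)-b|^q+|F(r_k)+b|^q$ for $q\ge 1$ puts the minimum at $b=0$ and delivers the stated bound from the contributions on $Q_k$ and $Q_k^*$ alone. For the bumps, one simply notes that any $b\in\R$ lies at distance at least $s_j^{-(n-p)/p}$ from one of the two peak values $\pm s_j^{-(n-p)/p}$, so one of the two balls contributes fully. Both are elementary, and beyond the arithmetic already laid out in the paragraphs preceding the theorem no further ingredients are needed.
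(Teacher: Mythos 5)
Your proposal is correct and takes essentially the same route as the paper: you contradict the hypothetical inequality by applying it to the mushroom functions $u_k$ (which rule out every $q\ge np/(n-p)$ via the degenerating necks, using $\lim_{t\to 0^+} t/\varphi(t)=\infty$) and to the large-scale bump functions $v_j$ (which rule out every $q<np/(n-p)$), and your two lower bounds on $\inf_{b\in\R}$ are the same elementary estimates the paper uses implicitly.
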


\begin{theorem}
Let $\phi$ satisfy (1)--(5),  and assume that  $\lim_{t \to 0^+} t/\phi(t) = m < \infty$. Let $G$ be the unbounded $\phi$-cigar John domain constructed in \eqref{eq:mushroon-domain}. Assume that there exist  numbers $q\in \R$  and $C\in\R$ such that the inequality
\[
\inf_{b\in \R} \| u- b \|_{L^q (G)} \le C \| \nabla u\|_{L^p(G)} 
\]
 holds for all $u \in L^1_p(G)$. Then $q= \frac{np}{n-p}$ and there exists $t_0>0$ such that $\phi(t) \approx t$ for all $t \in(0, t_0]$.
\end{theorem}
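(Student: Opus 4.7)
The plan is to combine the two test-function constructions already carried out in this section with the quasi-monotonicity of $\phi(t)/t$ coming from condition~(4). Since the target space is $L^q$, all norms below are the usual Lebesgue norms.

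First I would extract the lower bound $q \ge np/(n-p)$ from the ``large disjoint balls at infinity'' test functions $v_j$. The computation leading to \eqref{equ:ehto2} shows that if $q < np/(n-p)$ then $\inf_{b\in\R}\|v_j - b\|_{L^q(G)}^q \ge c\,s_j^{\,n - q(n-p)/p} \to \infty$ while $\|\nabla v_j\|_{L^p(G)}$ stays bounded, contradicting the hypothesis.

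Next I would establish $\phi(t) \approx t$ near $0$. By condition~(4), the ratio $\phi(t)/t$ is quasi-increasing, and hence bounded above on every interval $(0,t_0]$ by $C_\phi\phi(t_0)/t_0$. In particular $\phi(t)/t$ cannot tend to $\infty$ as $t \to 0^+$, which rules out $m = 0$; combined with the hypothesis $m < \infty$ this gives $m \in (0,\infty)$ and $\phi(t)/t \to 1/m$. Choosing $t_0 > 0$ so small that $\phi(t)/t$ lies in a small neighbourhood of $1/m$ on $(0,t_0]$ yields $\phi(t) \approx t$ there.

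Finally I would apply the hypothesis to the mushroom functions $u_k$ with $r_k \to 0^+$. The text has already verified that $\|\nabla u_k\|_{L^p(G)} = 1$ through the choice $F(r_k) = (r_k^{p-1}/(2\phi(r_k)^{n-1}))^{1/p}$, and the pointwise inequality $|A-b|^q + |A+b|^q \ge 2A^q$ for $q\ge 1$ gives $\inf_{b\in\R}\int_G|u_k-b|^q\,dx \ge c\,r_k^n F(r_k)^q$. Using $\phi(r_k) \approx r_k$ from the previous step, $F(r_k) \approx r_k^{(p-n)/p}$, so
\[
r_k^n F(r_k)^q \approx r_k^{(np - q(n-p))/p}.
\]
For the hypothesized inequality to hold uniformly in $k$, this quantity must remain bounded as $r_k \to 0^+$, which forces $q \le np/(n-p)$. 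Combined with the earlier lower bound we obtain $q = np/(n-p)$, and the relation $\phi(t) \approx t$ near $0$ has already been established. The one delicate point is the middle step: checking that $m < \infty$ together with quasi-monotonicity of $\phi(t)/t$ excludes $m = 0$; without this, neither the mushroom upper bound on $q$ nor the conclusion $\phi(t) \approx t$ would be available and the whole strategy would collapse.
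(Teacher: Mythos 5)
Your proof is correct and follows essentially the same route as the paper: the balls $v_j$ give the lower bound $q\ge np/(n-p)$, condition~(4) forces $m>0$ and hence $\phi(t)\approx t$ near $0$, and then the mushroom functions $u_k$ with $\phi(r_k)\approx r_k$ give the matching upper bound $q\le np/(n-p)$. You are slightly more explicit than the paper in spelling out that condition~(4) (quasi-increase of $\phi(t)/t$) rules out $m=0$, which the paper leaves implicit, but this is a fill-in rather than a different strategy.
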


%%%%%%%%%%%%%%%%%%%%%%%%%%%%%%%%%%%%%%%%%%%%%%%%%%%%%%

%%%%%%%%%%%%%%%%%%%%%%%%%%%%%%%%%%%%%%%%%%%%%%%%%%%%%%%%%%%%%%%%%%%%

\bibliographystyle{amsalpha}

\begin{thebibliography}{HH}

%\bibitem{AF}
%R. A. Adams and J. J. F. Fournier,
%\emph{Sobolev Spaces} Second edition,
%Pure and Applied Mathematics Series, 140. Elsevier/Academic Press, Amsterdam, 2003.

\bibitem{Cianchi1996}
A. Cianchi,
\textit{A sharp embedding theorem for Orlicz-Sobolev spaces},
Indiana Univ. Math. J.
\textbf{45} (1996), 39--65.



\bibitem{Cianchi1999}
A. Cianchi,
\textit{Strong and weak type inequalities for some classical operators in  Orlicz spaces},
J. London Math. Soc. (2)
\textbf{60} (1999), 187--202.



\bibitem{CS}
A. Cianchi and B. Stroffolini,
\textit{An extension of Hedberg's convolution inequality and applications},
J. Math. Anal. Appl.
\textbf{227} (1998), 166--186.


\bibitem{DieHHR11} 
L.\ Diening, P.\ Harjulehto, P.\ H\"ast\"o, and M.\ R\r u\v zi\v cka,
\emph{Lebesgue and Sobolev spaces with variable exponents}, 
Lecture Notes in Mathematics, 2017. Springer-Verlag, Heidelberg, 2011.

\bibitem{Gilbarg-Trudinger}
D.\ Gilbarg and N.\ S. Trudinger,
\emph{Elliptic Partial Differential Equations of Second Order}. Reprint of the 1998 edition. Classics in Mathematics. Springer-Verlag, Berlin, 2001. 





%\bibitem{G}
%D. J. H. Garling,
%\emph{Inequalities, A Journey into Linear Analysis}. Cambridge University Press, Cambridge, 2007.



\bibitem{Haj01}
P.\ Haj\l asz,
\textit{Sobolev inequalities, truncation method, and John domains}, Papers on Analysis: A volume dedicated to Olli Martio on the occasion of his 60th birthday. Edited by  J.\ Heinonen, T.\ Kilpel\"ainen, and P.\ Koskela, 
Report Univ. Jyv\"askyl\"a, \textbf{83}, University of Jyv\"askyl\"a, Jyv\"askyl\"a, 2001, pp. 109--126. 


\bibitem{HajK98}
P.\ Haj\l asz and P.\ Koskela,
\textit{Isoperimetric inequalities and imbedding theorems in irregular domains}, J.\ London Math.\ Soc.\ (2)\textbf{58} (1998), no. 2, 425--450.

\bibitem{Hajlasz-Koskela}
P.\ Haj\l asz and P.\ Koskela,
\textit{Sobolev met Poincar\'e},  Mem.\ Amer.\ Math.\ Soc. \textbf{145} (2000), no.\ 688, x+101 pp.



\bibitem{HH-S}
P.\ Harjulehto and R.\ Hurri-Syrj\"anen,
\textit{An embedding into an Orlicz space for 
$L^1_1$-functions from irregular domains}, 
to appear in Contemp. Math. `Complex Analysis and Dynamical Systems VI', 2015.


\bibitem{HH-S1}
P.\ Harjulehto and R.\ Hurri-Syrj\"anen,
\textit{Pointwise estimates to the modified Riesz potential}, 
arXiv:1406:1358.




\bibitem{HH-SK}
P.\ Harjulehto, R.\ Hurri-Syrj\"anen, and J.\ Kapulainen,
\textit{An embedding into an Orlicz space for irregular John domains}, 
F. W. Gehring Memorial Volume, Comput. Methods Funct. Theory \textbf{14} (2014), 257-277.




\bibitem{HH-SV}
P.\ Harjulehto, R.\ Hurri-Syrj\"anen, and A. V. V\"ah\"akangas,
\textit{On the $(1,p)$-Poincar\'e inequality}, 
Illinois J. Math. \textbf{56} (2012), 905--930.




\bibitem{Hed72}
L.\ I.\ Hedberg,
\textit{On certain convolution inequalities}, Proc.\ Amer.\ Math.\ Soc. \textbf{36} (1972), 505--510.





\bibitem{H-S92}
R.\ Hurri-Syrj\"anen, \textit{Unbounded Poincar\'e domains}, Ann.\ Acad.\ Sci.\ Fenn.\ A. I. Math.\ \textbf{17} (1992), 409--423.

\bibitem{Hasto}
P.\ H\"ast\"o, \textit{The maximal operator on Musielak-Orlicz spaces}, preprint, 8/2014.


\bibitem{J}
F. John, \textit{Rotation and strain}, Comm. Pure
    Appl. Math.\textbf{14} (1961), 391--413.




\bibitem{KilM00}
T.\ Kilpel\"ainen and J.\ Maly,
\textit{Sobolev inequalities on sets with irregular boundaries},
Z. Anal. Angew. \textbf{19} (2000), 369--380.

%\bibitem{Kokilashvili-Krbec}
%V.\ Kokilashvili and M.\ Krbec,
%\emph{Weighted Inequalities in Lorentz and Orlicz Spaces}. World Scientific. Singapore, 1991. 







%\bibitem{Mazya}
%V.\ Maz'ya,
%\textit{Sobolev Spaces with Applications to Elliptic Partial Differential Equations}, 2nd revised and augmented Edition, A Series of Comprehensive Studies in Mathematics, \textbf{342}, Springer Heidelberg Dordrecht London New York, 2011. 



%\bibitem{Mazya-Poborchi}
%V.\ Maz'ya and S.\ Poborchi,
%\textit{Differentiable Functions on Bad Domains}, World Scientific, Singapore,
%1997.

\bibitem{MS79}
O.\  Martio and J.\ Sarvas,  \textit{Injectivity theorems in plane and space}, Ann.\ Acad.\ Sci.\ Fenn.\ Ser.\ A. I. Math. \textbf{4} (1979), no.~2, 383--401.

%\bibitem{Moser1971}
%J. Moser, \textit{A sharp form of an inequality of N. Trudinger's inequality},
%Indiana Univ. Math. J.  \textbf{11}
%(1971), 1077--1092.



\bibitem{Nakki_Vaisala}
R.\ N\"akki and J.\ V\"ais\"al\"a,
\textit{John disks}, Exposition.\ Math. \textbf{9} (1991), 3--43.


\bibitem{R}
Yu.~G.~Reshetnyak,
\emph{Integral representations of differentiable functions in domains with nonsmooth 
boundary}
(Russian), \textit{Sibirsk. Mat. Zh}~\textbf{21}(1980), 108--116;
translation in 
\textit{Sib. Math. J.}~\textbf{21} (1981), 833--839.






\bibitem{SmiS90}
W.\ Smith and D.\ A.\ Stegenga,
\textit{H\"older domains and Poincar\'e domains},
Trans.\ Amer.\ Math.\ Soc. \textbf{319} (1990), no.~1, 67--100. 


\bibitem{Vaisala}
J.\ V\"ais\"al\"a,
\textit{Quasiconformal maps of cylindrical domains}, Acta\ Math. \textbf{162} (1989), 201--225.





%\bibitem{Tru}
%N. S. Trudinger,
%\textit{On imbeddings into Orlicz spaces and some applications},
%J. Math. Mech. \textbf{17} (1967), 473--483.

\end{thebibliography}

\end{document}